\newtheorem{thm}{Theorem}[section]
\newtheorem{prop}[thm]{Proposition}
\newtheorem{lem}[thm]{Lemma}
\newtheorem{conjecture}[thm]{Conjecture}
\theoremstyle{definition}
\newtheorem{question}[thm]{Question}
\theoremstyle{remark}
\newtheorem{remark}[thm]{Remark}
\numberwithin{equation}{section}
\newcommand{\addresseshere}{%
  \enddoc@text\let\enddoc@text\relax
}
\newcommand{\ep}{\epsilon}
\newcommand{\Perm}{\mathrm{Perm}}
\newcommand{\Hol}{\mathrm{Hol}}
\newcommand{\Aut}{\mathrm{Aut}}
\newcommand{\Hom}{\mathrm{Hom}}
\newcommand{\Map}{\mathrm{Map}}
\newcommand{\Inn}{\mathrm{Inn}}
\newcommand{\Out}{\mathrm{Out}}
\newcommand{\proj}{\mathrm{proj}}
\newcommand{\ff}{\mathfrak{f}}
\newcommand{\fg}{\mathfrak{g}}
\newcommand{\bZ}{\mathbb{Z}}
\newcommand{\bN}{\mathbb{N}}
\newcommand{\PSL}{\mathrm{PSL}}
\newcommand{\SL}{\mathrm{SL}}
\newcommand{\GL}{\mathrm{GL}}
\newcommand{\E}{\mathcal{E}}
\newcommand{\Fit}{\mathrm{Fit}}
\newcommand{\Sz}{\mathrm{Sz}}
\newcommand{\res}{\mathrm{res}}
\begin{document}

\large 

\title[Solvability of regular subgroups in the holomorph]{On the solvability of regular subgroups \\in the holomorph of a finite solvable group}

\author{Cindy (Sin Yi) Tsang}
\address{School of Mathematics (Zhuhai), Sun Yat-Sen University, P. R. China}
\email{zengshy26@mail.sysu.edu.cn}
\urladdr{http://sites.google.com/site/cindysinyitsang/} 

\author{Chao Qin}
\address{School of Mathematics (Zhuhai), Sun Yat-Sen University, P. R. China}
\email{qinch23@mail.sysu.edu.cn}

\date{\today}

\maketitle

\begin{abstract}We exhibit infinitely many natural numbers $n$ for which there exists at least one insolvable group of order $n$, and yet the holomorph of any solvable group of order $n$ has no insolvable regular subgroup. We also solve Problem 19.90 (d) in the Kourovka notebook.
\end{abstract}

\tableofcontents 


\section{Introduction}

Let $N$ be a finite group and write $\Perm(N)$ for its symmetric group. First recall that a subgroup $\mathcal{G}$ of $\Perm(N)$ is said to be \emph{regular} if the map
\[ \xi_\mathcal{G}:\mathcal{G}\longrightarrow N;\hspace{1em}\xi_\mathcal{G}(\sigma) = \sigma(1_N)\]
is bijective, or equivalently, if the $\mathcal{G}$-action on $N$ is both transitive and free. For example, the images of the left and right regular representations
\[\begin{cases}
\lambda: N\longrightarrow \Perm(N);\hspace{1em}\lambda(\eta) = (x\mapsto \eta x),\\
\rho:N\longrightarrow\Perm(N);\hspace{1em}\rho(\eta) = (x\mapsto x\eta^{-1}),
\end{cases}\]
respectively, are both regular subgroups of $\Perm(N)$. Plainly, a regular subgroup of $\Perm(N)$ has the same order as $N$, but is not necessarily isomorphic to $N$ in general. Given a group $G$ of order $|N|$, define
\[\E'(G,N) = \left\{\mbox{regular subgroups of $\Hol(N)$ isomorphic to $G$}\right\},\]
where $\Hol(N)$ denotes the \emph{holomorph of $N$} and is given by
\begin{equation}\label{Hol(N)}
\Hol(N) = \rho(N)\rtimes \Aut(N).
\end{equation}
This set $\E'(G,N)$ is an important object in the studies of Hopf-Galois structures and skew braces; see \cite[Chapter 2]{Childs book} and \cite{Skew}, respectively. In particular, there is a connection between elements of $\E'(G,N)$ and
\begin{enumerate}[1.]
\item Hopf-Galois structures of type $N$ on a Galois extension with group $G$;
\item skew braces with additive group $N$ and multiplicative group $G$.
\end{enumerate}
Let us remark that skew braces in turn are closely related to non-degenerate set-theoretic solutions to the Yang-Baxter equation; see \cite{YBE}.

\vspace{1.5mm}

Observe that $\E'(G,N)$ contains $\lambda(N)$ and $\rho(N)$ when $G\simeq N$. However, in general $\E'(G,N)$ might be empty when $G\not\simeq N$. It is natural to ask:

\begin{question}For the set $\E'(G,N)$ to be non-empty, what are some restrictions on $G$ and $N$ in terms of their group-theoretic properties?
\end{question}

This question was studied by N. P. Byott in \cite{Byott soluble}, where he showed that:

\begin{prop}\label{prop1}Let $G$ and $N$ be  two finite groups of the same order such that the set $\E'(G,N)$ is non-empty.
\begin{enumerate}[(a)]
\item If $N$ is nilpotent, then $G$ is solvable.
\item If $G$ is abelian, then $N$ is solvable.
\end{enumerate}
\end{prop}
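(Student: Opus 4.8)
The plan is to work throughout with the projection $\pi\colon\Hol(N)\to\Aut(N)$ whose kernel is $\rho(N)$, and with the point stabilizer $\Hol(N)_{1_N}=\Aut(N)$. If $\mathcal{G}\in\E'(G,N)$ is a regular subgroup isomorphic to $G$, then $\mathcal{G}\cap\Aut(N)=1$ (freeness) and $|\mathcal{G}|=|N|$, so one gets a factorization $\Hol(N)=\mathcal{G}\,\Aut(N)$ together with a short exact sequence
\[1\longrightarrow \mathcal{G}\cap\rho(N)\longrightarrow \mathcal{G}\longrightarrow Q\longrightarrow 1,\qquad Q:=\pi(\mathcal{G})\leq\Aut(N).\]
Here $\mathcal{G}\cap\rho(N)$ is, via $\rho^{-1}$, isomorphic to a subgroup of $N$. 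This reduces control of the composition factors of $G\cong\mathcal{G}$ to the interplay between a subgroup of $N$ and the subgroup $Q$ of $\Aut(N)$.

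For (a), suppose $N$ is nilpotent. Then $\mathcal{G}\cap\rho(N)$ is isomorphic to a subgroup of the nilpotent group $N$, hence solvable, so by the exact sequence it suffices to prove that $Q$ is solvable. Writing $N=\prod_i P_i$ as the direct product of its (characteristic) Sylow subgroups, one has $\Aut(N)=\prod_i\Aut(P_i)$ and in fact $\Hol(N)=\prod_i\Hol(P_i)$, under which $\mathcal{G}$ projects onto a transitive subgroup $\mathcal{G}_i\leq\Hol(P_i)$ in each factor. I would single out the normal subgroups $\mathcal{G}^{(i)}:=\mathcal{G}\cap\Hol(P_i)$: since $\mathcal{G}^{(i)}$ is trivial in every factor except the $i$th, it acts freely on the $p_i$-group $P_i$, and a group acting freely on a set of size $p_i^{a_i}$ has order dividing $p_i^{a_i}$, so each $\mathcal{G}^{(i)}$ is a $p_i$-group. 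The $\mathcal{G}^{(i)}$ lie in commuting direct factors and intersect trivially, so they generate a nilpotent normal subgroup $D=\prod_i\mathcal{G}^{(i)}\trianglelefteq\mathcal{G}$. I would then induct on the number of primes dividing $|N|$, using that $\mathcal{G}/\mathcal{G}^{(i)}$ is a transitive subgroup of $\Hol(N/P_i)$ to peel off one prime at a time.

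For (b), suppose $G$, hence $\mathcal{G}$, is abelian. The centralizer in $\Perm(N)$ of any regular subgroup is again regular of order $|N|$; since $\mathcal{G}$ is abelian it is contained in its own centralizer, and comparing orders forces $\mathcal{G}=\Cent_{\Perm(N)}(\mathcal{G})$, so $\mathcal{G}$ is self-centralizing. Moreover, as $\Hol(N)$ is the normalizer of $\rho(N)$ in $\Perm(N)$, the subgroup $\mathcal{G}$ normalizes $\rho(N)$. I would combine these facts in an induction on $|N|$ through the skew brace $(N,\cdot,\circ)$ determined by $\mathcal{G}$, whose multiplicative group $(N,\circ)\cong\mathcal{G}$ is abelian: a nontrivial proper ideal $M$ would produce a sub‑ and a quotient skew brace on $M$ and $N/M$ with abelian multiplicative groups, hence abelian regular subgroups in $\Hol(M)$ and $\Hol(N/M)$, and induction would give solvability of $N$.

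The main obstacle, in both parts, is that the subquotient actions produced are transitive but not regular: the point stabilizer of $\mathcal{G}/\mathcal{G}^{(i)}$ on $N/P_i$ is the image of a block stabilizer of $\mathcal{G}$, which acts regularly on a block $\cong P_i$ and is therefore a $p_i$-group, but need not be trivial. Thus the induction in (a) cannot be run on regular subgroups alone; I expect to have to strengthen the statement to transitive subgroups of $\Hol(N)$ with solvable point stabilizer, the genuinely hard case being a single prime, where $N=P$ is a $p$-group and one must show that a transitive subgroup of $\Hol(P)$ with solvable point stabilizer is solvable. Reducing modulo the Frattini subgroup turns this into a statement about transitive subgroups of $\bF_p^d\rtimes\GL_d(\bF_p)$ with solvable point stabilizer, i.e. about transitive linear groups, and ruling out the non‑solvable configurations (whose sections, such as $\PSL_d(\bF_p)$, could a priori have order dividing $|N|$) is where the real work lies. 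For (b) the corresponding difficulty is to produce the ideal $M$: the self‑centralizing condition must be exploited to locate a nontrivial characteristic subgroup of $N$ compatible with both operations of the brace.
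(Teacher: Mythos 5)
Your proposal is a program rather than a proof, and at its two load-bearing points it either stops or asserts something false. For part (a): the reductions you carry out are correct as far as they go (each $\mathcal{G}^{(i)}=\mathcal{G}\cap\Hol(P_i)$ acts freely on $P_i$ and is a $p_i$-group, $D=\prod_i\mathcal{G}^{(i)}$ is nilpotent normal, and the peeled-off images are transitive but not regular), but the strengthened statement you propose to induct on --- \emph{a transitive subgroup of $\Hol(P)$ with solvable point stabilizer is solvable} --- is false. Already for $P=(\bZ/2\bZ)^3$ one has $\Hol(P)=\mathrm{AGL}_3(2)=\bF_2^3\rtimes\GL_3(2)$, and since $H^1(\GL_3(2),\bF_2^3)\neq 0$ there is a complement to the translation subgroup not conjugate to the point stabilizer $\GL_3(2)$; such a complement is a copy of $\GL_3(2)\simeq\PSL_2(7)$ acting transitively (indeed $2$-transitively) on the $8$ points, with point stabilizer the Frobenius group of order $21$, which is solvable --- yet the group is simple. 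So the ``genuinely hard case'' you defer is not merely hard: as formulated it is wrong, and the induction collapses. Any correct argument must retain more than transitivity plus solvable stabilizers (e.g.\ the global order constraint $|\mathcal{G}|=|N|$, which your subquotients lose). Note that the paper does not reprove (a) either: its stated proof of the Proposition is a citation to Byott's Theorems 1 and 2, and Byott's proof of the nilpotent-type direction is a genuinely more elaborate argument than anything sketched here.

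For part (b) the situation is similar: your centralizer observation ($\mathcal{G}=\Cent_{\Perm(N)}(\mathcal{G})$) is correct but never used, and the brace induction has no engine, because everything hinges on producing a nontrivial proper ideal $M$, which you acknowledge you cannot do. Finite skew braces can be simple, so excluding a simple skew brace with abelian multiplicative group and insolvable additive group is essentially the statement being proved; the induction is circular at its base. The irony is that you already wrote down all the ingredients of the argument that works (the one reproduced in Section~\ref{proof sec1} for the stronger Theorem~\ref{thm1}, following Byott's Section 6): with $Q=\proj_\Aut(\mathcal{G})$, regularity gives $\mathcal{G}\cap\Aut(N)=1$ and hence $\rho(N)\rtimes Q=\mathcal{G}\cdot Q$ by counting; if $G$ is abelian then both $\mathcal{G}$ and its homomorphic image $Q$ are abelian, so by It\^o's theorem $\rho(N)\rtimes Q$ is metabelian, and since it contains $\rho(N)$, the group $N$ is metabelian, in particular solvable --- no induction, no ideals. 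In short: (b) admits a two-line finish from your own setup via It\^o, while (a) as you plan it runs into a false lemma and needs Byott's different and harder argument.
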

\begin{proof}See \cite[Theorems 1 and 2]{Byott soluble}. 
\end{proof}

In fact, the proof of Proposition~\ref{prop1} (b) from \cite[Section 6]{Byott soluble} may be used to show the following stronger result. This was observed by the first author in \cite[Theorem 4.2.4]{Tsang}, which is unpublished, and we shall reproduce the proof in Section~\ref{proof sec1} below. Let us remark that Theorem~\ref{thm1} (c) solves Problem 19.90 (d) in the Kourovka notebook \cite{K book}.

\begin{thm}\label{thm1}Let $G$ and $N$ be two finite groups of the same order such that the set $\E'(G,N)$ is non-empty.
\begin{enumerate}[(a)]
\item If $G$ is cyclic, then $N$ is supersolvable.
\item If $G$ is abelian, then $N$ is metabelian.
\item If $G$ is nilpotent, then $N$ is solvable.
\end{enumerate}
\end{thm}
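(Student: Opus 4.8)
The plan is to reformulate the hypothesis in the language of skew braces and then argue by induction on $|N|$. A nonempty $\E'(G,N)$ is equivalent to the existence of a skew (left) brace $B$ whose additive group $(B,+)$ is isomorphic to $N$ and whose multiplicative group $(B,\circ)$ is isomorphic to $G$: a regular subgroup $\mathcal{G}\le\Hol(N)$ with $\mathcal{G}\cong G$ transports the group law of $\mathcal{G}$ to $N$ along the bijection $\xi_{\mathcal{G}}$. The first thing I would record is the homomorphism $\lambda\colon(B,\circ)\to\Aut(N)$, $\lambda_a(b)=-a+(a\circ b)$, which is exactly the projection of $\mathcal{G}$ onto the $\Aut(N)$-factor of $\Hol(N)=\rho(N)\rtimes\Aut(N)$. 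Its image $H$ is a quotient of $G$ and its kernel $K$ is a subgroup of $G$ on which $+$ and $\circ$ coincide; both $H$ and $K$ therefore inherit from $G$ the property of being cyclic, abelian, or nilpotent.

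The engine of the induction is that a characteristic subgroup $M$ of the additive group $N$ is automatically an ideal of $B$ (it is preserved by every $\lambda_a\in\Aut(N)$ and is normal on both sides, which I would verify), so that $M$ and $B/M$ are again skew braces. This gives $\E'((M,\circ),M)\neq\emptyset$ and $\E'(G/(M,\circ),N/M)\neq\emptyset$, where $(M,\circ)\le G$ and $G/(M,\circ)$ are a subgroup and a quotient of $G$, hence still cyclic/abelian/nilpotent. Since every ideal of $B$ is in particular a normal subgroup of $N$, any ascending chain of ideals is a \emph{normal} series of $N$; this is what lets me apply the inductive hypothesis both to $M$ and to $N/M$ and then reassemble.

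For (c) I would use that solvability is closed under extensions: by induction $M$ and $N/M$ are solvable for any proper nontrivial characteristic $M$, so the only remaining case is $N$ characteristically simple, $N\cong S^t$ with $S$ simple. For (b), where metabelianness is \emph{not} extension-closed, I would instead take $M=N'=[N,N]_+$, which is characteristic and hence an ideal; then $B/N'$ has abelian additive group, and the problem reduces to proving $N'$ itself abelian, which I would attempt by showing $(N',+)$ lies inside the region where $\circ$ and $+$ agree (so that it embeds into the abelian group $G$). For (a) I would reduce to showing that every chief factor of $N$ is cyclic: each chief factor is realized as a subquotient brace with cyclic multiplicative group, and cyclicity of the $\circ$-group must be shown to force the additive chief factor to be cyclic—equivalently, to exclude noncentral elementary-abelian chief factors of rank $\ge 2$.

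The main obstacle in every case is the irreducible/base step of the induction, where the hypothesis on $G$ must actually be spent: $N$ characteristically simple in (c), $N'$ abelian in (b), and a single chief factor in (a). The sharpest point is excluding a nonabelian simple group $S$ as an additive composition factor while $G$ is only assumed nilpotent; this is where I expect to invoke Proposition~\ref{prop1} together with the classification of finite simple groups and the known rigidity of skew braces over (almost) simple additive groups. The strengthenings from ``solvable'' to ``metabelian'' and ``supersolvable'' require, beyond Byott's original argument, that one track not merely solvability of the factors but their derived length and the cyclicity of the chief factors, and this bookkeeping is the genuinely new work.
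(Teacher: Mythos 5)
Your proposal contains a genuine gap, and in fact two. The engine of your induction is the claim that a characteristic subgroup $M$ of the additive group $N=(B,+)$ is \emph{automatically an ideal} of the skew brace $B$. This is false in general: characteristicity gives $\lambda_a(M)=M$ for all $a$ and normality in $(B,+)$, i.e.\ $M$ is a \emph{strong left ideal}, but it does not give normality of $(M,\circ)$ in $(B,\circ)$, which is the third condition in the definition of an ideal and is exactly the part your parenthetical ``which I would verify'' cannot deliver. Consequently $G/(M,\circ)$ need not be a group, $B/M$ need not be a skew brace, and the statement $\E'(G/(M,\circ),N/M)\neq\emptyset$ has no meaning. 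This is precisely why the paper's Proposition~\ref{test2} claims only that $H=\fg^{-1}(M)$ (which is your $(M,\circ)$) is a \emph{subgroup} of $G$ with $\E'(H,M)\neq\emptyset$, and recovers quotient information only in the weaker form of a homomorphism $\ker(\theta_M\circ\ff)\longrightarrow N/M$. Without the quotient half, the ``reassembly'' step of your induction collapses; and even a correct two-sided induction would not suffice for (a) and (b), since supersolvability and metabelianness are not extension-closed (you note this for (b), but the substitute you propose is itself unproven).

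Independently, the base cases --- where, as you correctly say, the hypothesis on $G$ must actually be spent --- are described rather than proved: you give no argument that cyclic $G$ forces cyclic chief factors, no reason why $(N',+)$ should lie in the set where $+$ and $\circ$ agree, and no proof excluding $N\simeq S^t$ with $S$ non-abelian simple when $G$ is nilpotent (Proposition~\ref{prop1}(a) concerns nilpotent $N$, and Proposition~\ref{prop2}(a) concerns simple $G$, so neither applies). The paper's actual proof avoids all of this with one observation you came within a step of making: you already have $\lambda=\proj_\Aut|_{\mathcal{G}}$ with image a quotient of $G$, and the key identity is $\rho(N)\rtimes\proj_\Aut(\mathcal{G})=\mathcal{G}\cdot\proj_\Aut(\mathcal{G})$, exhibiting a group factorized as a product of two subgroups, one isomorphic to $G$ and one a quotient of $G$. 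The classical factorization theorems of Douglas, It\^{o}, and Kegel (Lemma~\ref{lem product}) make this product supersolvable, metabelian, or solvable according to whether $G$ is cyclic, abelian, or nilpotent, and $N\simeq\rho(N)$ inherits the property because all three properties are subgroup-closed (Lemma~\ref{lem properties}). No induction, no quotient braces, and no classification of finite simple groups are needed.
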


In the proof of \cite[Corollary 1.1]{Byott soluble}, N. P. Byott gave examples of solvable $G$ and insolvable $N$ with non-empty $\E'(G,N)$. Also, he noted that by contrast, so far there is no known example of
\begin{equation}\label{GN conditions}\mbox{insolvable $G$ and solvable $N$ with non-empty $\E'(G,N)$}.\end{equation}
Results in the literature suggest that in fact no such example exists. 

\begin{prop}\label{prop2} Let $G$ and $N$ be two finite groups of the same order such that the set $\E'(G,N)$ is non-empty.
\begin{enumerate}[(a)]
\item If $G$ is non-abelian simple, then $N\simeq G$.
\item If $G$ is the double cover of $A_m$ with $m\geq5$, then $N\simeq G$.
\item If $G$ is $S_m$ with $m\geq 5$, then $N$ contains an isomorphic copy of $A_m$.
\end{enumerate}
Here $A_m$ and $S_m$ denote, respectively, the alternating and symmetric groups on $m$ letters.
\end{prop}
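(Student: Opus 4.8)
The plan is to pass from a regular subgroup to its projection onto $\Aut(N)$ and then exploit the normal subgroup lattice of $G$. Fix $\mathcal{G}\in\E'(G,N)$ and let $\pi\colon\Hol(N)\to\Aut(N)$ be the projection with kernel $\rho(N)$, coming from the decomposition $\Hol(N)=\rho(N)\rtimes\Aut(N)$ in \eqref{Hol(N)}. Set $K=\mathcal{G}\cap\rho(N)=\ker(\pi|_{\mathcal{G}})$, a normal subgroup of $\mathcal{G}\cong G$, so that $\pi(\mathcal{G})\cong G/K$ is a subgroup of $\Aut(N)$. Two elementary observations drive everything: since $K\leq\rho(N)\cong N$, the group $N$ contains an isomorphic copy of $K$; and if $K=\mathcal{G}$ then $\mathcal{G}\leq\rho(N)$, whence equality of orders forces $\mathcal{G}=\rho(N)$ and so $N\cong G$.

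First I would run the reduction in each part using that $K$ is normal in $G$. For (a), where $G$ is simple, $K\in\{1,G\}$ and $K=G$ already gives $N\cong G$. For (c), where $G=S_m$ has normal subgroups $1,A_m,S_m$, the case $K=S_m$ gives $N\cong S_m$, and the case $K=A_m$ gives $A_m\cong K\leq\rho(N)\cong N$, so $N$ contains $A_m$ as required. For (b), where $G$ is the double cover of $A_m$ with center $Z\cong\bZ/2$ and normal subgroups $1,Z,G$, the case $K=G$ gives $N\cong G$. Thus in every part the surviving possibilities are $K=1$ (and, for (b), also $K=Z$): here $\pi(\mathcal{G})\cong G/K$ is a subgroup of $\Aut(N)$ still possessing a non-abelian simple composition factor $T$, namely $G$ itself in (a) and $A_m$ in (b), (c).

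The hard part is this surviving case, which I would attack through the normal subgroup $\Inn(N)\cong N/Z(N)$ of $\Aut(N)$ by examining the image of $\pi(\mathcal{G})$ in $\Out(N)=\Aut(N)/\Inn(N)$. If the factor $T$ lies inside $\Inn(N)$, then comparing $|T|$ with $|N/Z(N)|\leq|N|$ severely constrains $Z(N)$: in (a) it forces $Z(N)=1$ and $N\cong\Inn(N)\cong G$, while in (b), (c) it forces $|Z(N)|\leq 2$ and leaves $N$ among the finitely many extensions with composition factors $A_m$ and $\bZ/2$, that is $N\in\{A_m\times\bZ/2,\,2.A_m,\,S_m\}$. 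The genuine obstacle is the opposite situation, in which $T$ maps injectively into $\Out(N)$: one must exclude that a non-abelian simple group of order at least $|N|/2$ embeds in the outer automorphism group of a group of order $|N|$. This is exactly where the classification of finite simple groups enters, through Schreier's conjecture (the outer automorphism group of every finite simple group is solvable) together with the order constraint, which bound the non-abelian composition factors available in $\Out(N)$ and push $T$ back inside $\Inn(N)$; this CFSG-dependent step is the crux and is supplied by the cited references.

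Finally, for (b) and (c) I would finish by pinning down $N$ among the three survivors $A_m\times\bZ/2$, $2.A_m$, and $S_m$. For (c) it suffices to exclude $N\cong 2.A_m$: being a nonsplit central extension of the simple group $A_m$, the double cover contains no subgroup isomorphic to $A_m$, whereas both $A_m\times\bZ/2$ and $S_m$ do, so every surviving $N$ contains $A_m$. For (b), where $G=2.A_m$, one instead excludes $A_m\times\bZ/2$ and $S_m$ to force $N\cong G$. These last eliminations use the specific structure of $G$ and its regular action rather than merely its order, and together with the CFSG input above constitute the complete argument recorded in the literature.
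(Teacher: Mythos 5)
First, a calibration point: the paper's own ``proof'' of Proposition~\ref{prop2} is nothing but a pointer to three external results (\cite[Theorem 1.1]{Byott simple}, \cite[Theorem 1.6]{Tsang HG}, \cite[Theorem 1.3]{Tsang Sn}), so any genuine argument here goes beyond what the paper records. Your opening reduction is correct and is in fact how those cited proofs begin: since $\rho(N)\trianglelefteq\Hol(N)$, the kernel $K=\mathcal{G}\cap\rho(N)$ is normal in $\mathcal{G}\simeq G$, the case $K=\mathcal{G}$ forces $\mathcal{G}=\rho(N)$ by order count, and the short normal-subgroup lattices of $G$ in (a)--(c) leave exactly the residual cases you list. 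Your exclusion of $A_m\hookrightarrow 2.A_m$ via non-splitness is also sound (a copy of $A_m$ would be a complement to the centre, since $A_m$ is centreless).

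However, there are genuine gaps at precisely the two places where the cited papers do their work. (1) The crux is not ``Schreier plus an order constraint,'' and as you phrase it the claim is false: a non-abelian simple group of order at least $|N|/2$ \emph{can} embed in $\Out(N)$ --- take $N=(\bZ/2\bZ)^4$, so $|N|=16$ while $\Out(N)=\Aut(N)\simeq\GL_4(2)\simeq A_8$ has order $20160$. What is actually needed is the exact constraint $|T|\in\{|N|,|N|/2\}$ fed into an induction through characteristic subgroups of $N$, using the restriction and quotient maps $\Aut(N)\to\Aut(M)$, $\Aut(N)\to\Aut(N/M)$ and applying Schreier's conjecture at each characteristically simple layer $T^k$ (where $\Aut(T^k)\simeq\Aut(T)\wr S_k$); this occupies the bulk of Byott's paper and your sentence ``push $T$ back inside $\Inn(N)$'' conceals all of it. (2) The terminal eliminations in (b) and (c) --- ruling out $N\simeq 2.A_m$ when $G=S_m$, and ruling out $N\simeq S_m$ and $A_m\times\bZ/2\bZ$ when $G=2.A_m$ --- are the main content of the two Tsang references, proved there by counting bijective crossed homomorphisms and related fixed-point arguments; you defer them wholesale, so your write-up, like the paper's, is ultimately a citation rather than a proof. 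A smaller slip: for $m=6$ your candidate list is incomplete, since the extensions of $\bZ/2\bZ$ by $A_6$ include also $\mathrm{PGL}_2(9)$ and $M_{10}$; this is harmless for (c), because any such extension contains its kernel copy of $A_6$, but it adds cases to eliminate in (b).
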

\begin{proof}
See \cite[Theorem 1.1]{Byott simple}, \cite[Theorem 1.6]{Tsang HG}, and \cite[Theorem 1.3]{Tsang Sn}.
\end{proof}

It leads us to the following conjecture. It was N. P. Byott who told the first author about this problem in person and Conjecture~\ref{conjecture} should be attributed to him.

\begin{conjecture}\label{conjecture}For any $n\in\bN$, there do not exist finite groups $G$ and $N$ both of order $n$ for which (\ref{GN conditions}) holds.
\end{conjecture}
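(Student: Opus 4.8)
The plan is to translate the hypothesis into the language of skew braces and then attack a minimal counterexample. Recall that $\E'(G,N)\neq\emptyset$ is equivalent to the existence of a skew brace $B$ with additive group $(B,+)\simeq N$ and multiplicative group $(B,\cdot)\simeq G$; this is the dictionary underlying items (1)--(2) of the introduction. So the conjecture is exactly the assertion that, in a skew brace, solvability of the additive group forces solvability of the multiplicative group. I would phrase everything on the permutation-group side when convenient and on the brace side when convenient, using the two pictures interchangeably.

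First I would record the basic projection. Writing $\pi:\Hol(N)=\rho(N)\rtimes\Aut(N)\to\Aut(N)$ for the quotient map killing $\rho(N)$, any regular $\mathcal{G}\simeq G$ fits in a short exact sequence $1\to \mathcal{G}\cap\rho(N)\to\mathcal{G}\to\pi(\mathcal{G})\to 1$. Since $\mathcal{G}\cap\rho(N)$ is a subgroup of $\rho(N)\simeq N$, it is solvable whenever $N$ is; hence every non-abelian composition factor of $G$ must already occur in $\pi(\mathcal{G})\leq\Aut(N)$ (this image is precisely the image of the brace map $\lambda\colon G\to\Aut(N)$). Thus the entire problem reduces to controlling the image of $\mathcal{G}$ in $\Aut(N)$.

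Next I would pass to a minimal counterexample: take $N$ solvable, $G$ insolvable, $\E'(G,N)\neq\emptyset$ with $|N|$ as small as possible, and let $B$ be the corresponding skew brace. Any nonzero proper ideal $I$ of $B$ simultaneously yields a normal subgroup of $N$ and a normal subgroup of $G$, together with the quotient brace $B/I$, and both $I$ and $B/I$ have solvable additive groups of smaller order. By minimality their multiplicative groups $(I,\cdot)$ and $G/(I,\cdot)$ are solvable, whence $G$ is solvable, a contradiction. Therefore $B$ must be a \emph{simple} skew brace, and it suffices to rule out simple skew braces with solvable additive group and insolvable multiplicative group.

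The hard part is precisely this last step. The obstruction is that $\Aut(N)$ can itself have non-abelian simple composition factors even when $N$ is solvable --- for example $\Aut(\bF_p^k)=\GL_k(\bF_p)$ for $k\geq 2$ --- so the crude projection argument does not terminate; one must genuinely exploit \emph{regularity} (the freeness and transitivity of the $\mathcal{G}$-action) to force $\pi(\mathcal{G})$ to be solvable. For the special shapes of $G$ treated in Proposition~\ref{prop2}, the cited fixed-point and composition-factor arguments accomplish exactly this, and they suggest the correct general principle: every non-abelian simple composition factor of $G$ should propagate to $N$, making $N$ insolvable. Establishing this propagation for an arbitrary insolvable $G$ is the step I expect to be the real difficulty; it appears to require the classification of finite simple groups together with a delicate analysis of how a simple section of $\pi(\mathcal{G})$ can act freely and transitively, through the holomorph, on a solvable group $N$ of matching order. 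This is why the full conjecture remains open, and why the present work instead settles the statement for infinitely many individual orders $n$ by using Proposition~\ref{prop2} to pin down the composition factors available to $G$.
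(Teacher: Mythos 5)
You were asked to prove Conjecture~\ref{conjecture}, but note that the paper itself contains no proof of this statement: it is presented as an open conjecture, and the paper only establishes it for restricted families of orders (Theorems~\ref{main thm1}, \ref{main thm2}, \ref{main thm3}) by means of the necessary criteria in Propositions~\ref{test1} and~\ref{test2}. Your proposal is likewise not a proof, and you say so yourself: the decisive step --- ruling out a finite simple skew brace with solvable additive group and insolvable multiplicative group --- is precisely the open content of the conjecture, and you leave it unresolved. So there is a genuine gap: everything you write before that point is a (correct) reduction, not an argument that closes it. Your ideal-based minimal-counterexample reduction is sound given standard skew brace facts: an ideal $I$ is normal in both the additive and multiplicative groups, the pairs $\bigl((I,\cdot),(I,+)\bigr)$ and $\bigl(G/(I,\cdot),\,N/(I,+)\bigr)$ again have non-empty $\E'$-sets of smaller order, and Lemma~\ref{solvable lem} then forces $G$ solvable. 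But the reduction stalls exactly when $B$ is simple, and simple skew braces do exist, so this machinery cannot terminate on its own.

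It is worth comparing your reduction with the one the paper actually uses for its partial results. Proposition~\ref{test2} works with an arbitrary (in practice, maximal) characteristic subgroup $M$ of $N$, which always exists for $N\neq 1$ --- no ideal of the brace is needed --- and produces $H=\fg^{-1}(M)\leq G$ with $\E'(H,M)\neq\emptyset$. The price is that $H$ is merely a subgroup of $G$, not normal, so one cannot split $G$ as an extension the way you do with an ideal; the paper compensates with the insolvability criterion on $\ker(\theta_M\circ\ff)$, exploiting that $N/M\simeq(\bZ/p\bZ)^m$ for solvable $N$ and Lemma~\ref{GL lem} on solvability of $\GL_m(p)$, together with arithmetic hypotheses on $n$ (cube-free in Theorem~\ref{main thm1}, the four conditions of Theorem~\ref{general thm} in Theorem~\ref{main thm2}) to run an induction on $n$. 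Your projection observation --- that every non-abelian composition factor of $G$ survives in the image of $\mathcal{G}$ in $\Aut(N)$, since $\mathcal{G}\cap\rho(N)$ is solvable --- is essentially Proposition~\ref{test1}. In short, your proposal runs parallel to the paper's toolkit and correctly identifies where the difficulty lives (the simple-brace case and the possible insolvable sections of $\Aut(N)$ such as $\GL_m(p)$), but, like the paper, it does not prove the conjecture, so it cannot be accepted as a proof of the stated result.
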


In Section~\ref{criteria sec}, using techniques developed by the first author in \cite[Section 4.1]{Tsang HG}, we shall prove some necessary criteria for $\E'(G,N)$ to be non-empty. In Sections~\ref{cubefree sec} and~\ref{sqfree sec}, by applying our criteria, we shall show that:

\begin{thm}\label{main thm1}Conjecture~$\ref{conjecture}$ holds when $n$ is cube-free.
\end{thm}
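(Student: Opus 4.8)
The plan is to prove Conjecture~\ref{conjecture} for cube-free $n$ by combining the structural constraints on cube-free groups with the necessary criteria for $\E'(G,N)$ to be non-empty developed earlier in the paper. Suppose for contradiction that $n$ is cube-free and that there exist groups $G$ and $N$ of order $n$ with $\E'(G,N)$ non-empty, where $G$ is insolvable and $N$ is solvable. The central fact I would exploit is that a cube-free number factors as a product of primes to powers at most $2$, so both $G$ and $N$ are cube-free groups, and the classification of cube-free groups is quite restrictive: in particular, a cube-free insolvable group $G$ must have its composition factors drawn from a very short list. By the theorem of the structure of cube-free groups (going back to work on groups whose order is cube-free), the only non-abelian simple groups that can appear as composition factors of a cube-free group are those of the form $\PSL(2,p)$ for suitable primes $p$ with $|\PSL(2,p)|$ cube-free. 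So $G$ is built, via its composition series, from cyclic factors and copies of such $\PSL(2,p)$.

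The key step is then to locate a non-abelian simple composition factor of $G$ and transport information about it to $N$ using the non-emptiness of $\E'(G,N)$. I would argue that if $G$ is insolvable and cube-free, then $G$ has a quotient or subquotient that is essentially one of the groups covered by Proposition~\ref{prop2}---most plausibly a group closely related to $\PSL(2,p)\simeq A_5$ type behaviour when $p=5$, or more generally $\PSL(2,p)$. Indeed, since $\PSL(2,p)$ is non-abelian simple, part~(a) of Proposition~\ref{prop2} says that if such a group itself occurred as $G$ then $N\simeq G$, contradicting solvability of $N$. The real work is to reduce the general cube-free insolvable $G$ to this simple case. For this I would use the necessary criteria from Section~\ref{criteria sec}: these criteria, which relate characteristic subgroups, quotients, or the composition structure of $G$ and $N$ whenever $\E'(G,N)\neq\varnothing$, should allow me to pass from $G$ to a subquotient isomorphic to a non-abelian simple $\PSL(2,p)$ and simultaneously produce a corresponding subquotient of $N$, forcing $N$ to inherit an insolvable section and hence to be insolvable---the desired contradiction.

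Concretely, the steps I would carry out in order are as follows. First, invoke the classification of cube-free groups to pin down the possible composition factors of the insolvable group $G$, isolating a non-abelian simple factor $S\simeq\PSL(2,p)$. Second, analyze the normal/subnormal structure: because $n$ is cube-free, the Sylow structure is tame, and I expect that $S$ appears not merely as an abstract composition factor but as a genuine subquotient sitting in a controlled way inside $G$, so that the generalized Fitting subgroup and the layer $E(G)$ are understood. Third, apply the necessary criteria from Section~\ref{criteria sec} to translate this simple subquotient of $G$ into structural information about $N$; the criteria are designed precisely to constrain $N$ in terms of $G$ when $\E'(G,N)$ is non-empty. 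Fourth, conclude that $N$ must contain an insolvable section, contradicting the hypothesis that $N$ is solvable. Finally, I would remark that the cube-free hypothesis is used twice: once to bound the composition factors of $G$ via the cube-free classification, and once to keep the Sylow and normal structure simple enough that the criteria apply cleanly.

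The hard part will be the third step: making the passage from a simple subquotient of $G$ to insolvability of $N$ rigorous. Proposition~\ref{prop2} is stated for $G$ itself being simple (or a double cover, or $S_m$), not for a proper subquotient of $G$, so I cannot apply it directly. The main obstacle is therefore to show that the criteria of Section~\ref{criteria sec} are strong enough to ``localize'' at a simple section---that is, to deduce from $\E'(G,N)\neq\varnothing$ and the presence of $S\simeq\PSL(2,p)$ as a subquotient of $G$ that $N$ has an isomorphic insolvable section, even when $S$ is not all of $G$. I anticipate that this requires a careful compatibility argument between the composition series of $G$ and $N$ under the regular-embedding hypothesis, possibly treating separately the handful of small primes $p$ for which $|\PSL(2,p)|$ is cube-free, and checking that no arithmetic coincidence in the cube-free factorization of $n$ allows $N$ to be solvable while absorbing the simple section of $G$.
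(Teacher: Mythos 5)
There is a genuine gap, and you have in fact located it yourself: your third step---passing from a non-abelian simple subquotient $S$ of $G$ to an insolvable section of $N$---has no support in the tools of Section~\ref{criteria sec}, and the paper never attempts it. The machinery there runs in the \emph{opposite} direction: Proposition~\ref{test2} starts from a characteristic subgroup $M$ of the solvable group $N$ and produces a subgroup $H=\fg^{-1}(M)$ of $G$ with $\E'(H,M)\neq\emptyset$; nothing allows you to start from a distinguished subquotient of $G$ and force a matching section of $N$. Carrying out your step 3 would amount to extending Proposition~\ref{prop2}\,(a) from simple $G$ to groups merely possessing a simple subquotient, which is an open-ended strengthening rather than the ``careful compatibility argument'' you hope for. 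A further soft spot: the relevant fact from \cite{cubefree} is that an insolvable group of cube-free order contains a non-abelian simple \emph{subgroup}; knowing only the composition factors (which is all your step 1 provides) is not enough to feed into any kernel argument.

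The paper's actual proof is a short descent that your sketch is missing in all three of its ingredients. Take $n$ a minimal cube-free counterexample, with $G$ insolvable, $N$ solvable, $\E'(G,N)\neq\emptyset$, and let $M$ be a proper maximal characteristic subgroup of $N$. Then $N/M\simeq(\bZ/p\bZ)^m$ as in (\ref{N/M}), and cube-freeness forces $m\leq 2$---this, not a classification of the composition factors of $G$, is the decisive use of the hypothesis. Since $\GL_1(p)$ and $\GL_2(p)$ contain no non-abelian simple subgroup (Lemma~\ref{insol kernel}\,(a), via the determinant map and the fact that the unique involution of $\SL_2(p)$ for $p$ odd is central, or alternatively Dickson's classification), whereas $G$, being insolvable of cube-free order, does contain one, the kernel of $\theta_M\circ\ff\colon G\longrightarrow\Aut(N/M)$ is insolvable. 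Proposition~\ref{test2} then yields an insolvable subgroup $H$ of $G$ with $\E'(H,M)\neq\emptyset$ and $|H|=|M|=n/p^m$ cube-free and smaller than $n$, contradicting minimality. Without the minimal-counterexample induction, the choice of a maximal characteristic subgroup of $N$ with elementary abelian quotient of rank at most two, and the simple-subgroup-versus-$\GL_m(p)$ kernel lemma, the proposal cannot be completed along the lines you describe.
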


\begin{thm}\label{main thm2}Conjecture~$\ref{conjecture}$ holds when $ n = 2^r\cdot n_0$ with
\[ n_0 = 2^2\cdot 3\cdot 5,\,2^4\cdot 3^2\cdot 17,\mbox{ or }4^{\ell_0}(4^{\ell_0}+1)(2^{\ell_0}-1),\]
where $\ell_0$ is any odd prime such that $(4^{\ell_0}+1)(2^{\ell_0}-1)$ is square-free and $r$ is any non-negative integer.\end{thm}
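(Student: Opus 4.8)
The plan is to argue by contradiction. Suppose $G$ is insolvable and $N$ is solvable with $|G|=|N|=n=2^r n_0$ and $\E'(G,N)\neq\emptyset$, and fix a non-abelian composition factor $T$ of $G$. Since $|T|$ divides $n$, the whole argument splits into two tasks: first, pin down $T$ from the arithmetic of $n_0$; second, feed this back through the criteria of Section~\ref{criteria sec} (with Proposition~\ref{prop2}) to contradict the solvability of $N$. The first task is where the three specific shapes of $n_0$ are engineered to cooperate, and the second is where the real work lies.

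To identify $T$ I would proceed case by case. In the first two cases the prime set of $n$ is exactly $\{2,3,5\}$ or $\{2,3,17\}$, of size three; since a non-abelian simple group has at least three prime divisors (Burnside), $T$ must be divisible by precisely these primes. I would then invoke the classification of non-abelian simple groups whose order has exactly three prime divisors (the simple $K_3$-groups), whose complete list is $A_5$, $A_6$, $\PSL(2,7)$, $\PSL(2,8)$, $\PSL(2,17)$, $\PSL(3,3)$, $\mathrm{PSU}(3,3)$, $\mathrm{PSU}(4,2)$. Reading off orders, the candidates with prime set $\{2,3,5\}$ are $A_5$, $A_6$, $\mathrm{PSU}(4,2)$, while the only one with prime set $\{2,3,17\}$ is $\PSL(2,17)$. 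Using that the $3$-part of $n=2^{r+2}\cdot3\cdot5$ is only $3$ eliminates $A_6$ (with $3^2$) and $\mathrm{PSU}(4,2)$ (with $3^4$), leaving $T\simeq A_5$; in the other case $T\simeq\PSL(2,17)$ directly. For the Suzuki case the key point is that $3\nmid n_0$: since $\ell_0$ is odd, $2^{\ell_0}\equiv-1\pmod 3$, whence $(4^{\ell_0}+1)(2^{\ell_0}-1)\equiv 2\pmod 3$, so $3\nmid|T|$; by the theorem that the only non-abelian simple groups of order prime to $3$ are the Suzuki groups, $T\simeq\Sz(q')$ with $q'=2^{f}$. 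A Zsigmondy/multiplicative-order analysis then forces $q'=q$: primes dividing $2^{f}-1$ have order (of $2$) dividing the odd number $f$, and to lie among the orders realized by the square-free number $(4^{\ell_0}+1)(2^{\ell_0}-1)$ (whose odd prime divisors have order $\ell_0$, $4$, or $4\ell_0$) they must have order $\ell_0$, forcing $\ell_0\mid f$; a primitive prime divisor of $2^{f}-1$ then rules out $f>\ell_0$.

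Because $|T|=n_0$ and $n/n_0=2^r$ is a $2$-power while $|T|$ is divisible by an odd prime, $T$ occurs exactly once among the composition factors of $G$ and is its only non-abelian one, all others being $\bZ/2\bZ$. Writing $R$ for the solvable radical of $G$, the quotient $G/R$ is therefore almost simple with socle $T$, i.e.\ $T\leq G/R\leq\Aut(T)$. With $T$ so isolated, I would then apply the criteria of Section~\ref{criteria sec} together with Proposition~\ref{prop2} to the pair $(G,N)$: the aim is to descend from $G$ to its almost-simple section $G/R$ and show that the surviving regular-subgroup data force $N$ to admit $T$ as a section, contradicting solvability. Proposition~\ref{prop2}(a) already settles the genuinely simple case ($N\simeq G$ is insolvable), and the criteria are what should reduce the general configuration to it.

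The hard part will be exactly this last reduction. Merely knowing that $T$ is a composition factor of $G$ is not enough, because $T$ can embed as a plain subgroup of $\Aut(N)$ or of a general linear group — for instance $A_5\leq\GL(4,2)=A_8$ — without $N$ being insolvable, so bookkeeping with composition factors alone collapses. The criteria of Section~\ref{criteria sec} must instead exploit the regularity of the subgroup, not just the isomorphism type of $G$, to push $T$ into $N$ itself; and they must do so uniformly in $r$, so that the harmless factor $2^r$ never opens up enough room in a $2$-modular representation to absorb $T$. I expect this uniform-in-$r$ step to be the principal obstacle, with the Suzuki number theory forcing $q'=q$ as a secondary technical hurdle.
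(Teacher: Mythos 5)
Your first task---identifying the non-abelian composition factor $T$---is sound and runs parallel to what the paper does in Lemmas~\ref{lem1},~\ref{lem1'},~\ref{lem2}, and~\ref{lem2'} (via Herzog, Brauer, Wales, and the classification of simple groups of order coprime to $3$, rather than the $K_3$ list, but to the same effect). The genuine gap is that your second task is never carried out: you reduce to the configuration ``$G/R$ almost simple with socle $T$, $N$ solvable'' and then state, correctly, that composition-factor bookkeeping cannot force $T$ into $N$ (your $A_5\leq\GL_4(2)$ example is exactly the right worry), but you offer no mechanism to close the argument. Since that reduction \emph{is} the theorem, the proposal as written proves nothing beyond the identification of $T$; in particular you never actually invoke Proposition~\ref{test2} on concrete characteristic subgroups, never set up any induction in $r$, and Proposition~\ref{prop2}(a) only disposes of the case where $G$ itself is simple, i.e.\ $r=0$.

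The paper's proof avoids your obstacle by working on the solvable side $N$ instead of on $G$. Theorem~\ref{general thm} inducts on $r$: pick a maximal characteristic subgroup $M$ of $N$, so $N/M\simeq(\bZ/p\bZ)^m$, and use Proposition~\ref{test2} to produce $H\leq G$ with $|H|=|M|=n/p^m$ and $\E'(H,M)\neq\emptyset$. For odd $p$, the hypothesis that the odd part of $n_0$ away from (at most) $3^2$ is square-free forces $m=1$ (or $m\leq2$ with $p=3$), so $\GL_m(p)$ is solvable, $H$ may be taken insolvable, and $|H|$ is a solvable number---contradiction. This, incidentally, is the real role of the square-freeness of $(4^{\ell_0}+1)(2^{\ell_0}-1)$, not the multiplicative-order analysis you assign it to. For $p=2$---precisely your problematic case, where $\GL_m(2)$ is insolvable for $m\geq3$---the paper never considers the kernel of $G\longrightarrow\GL_m(2)$ at all: $H$ has index $2^m$ in $G$, and Lemma~\ref{index power of two}, proved by a separate induction on $r$ from conditions 1--4 (no simple group of order $2^rn_0$, solvability of $n_0/2$ and of $(2^rn_0)/p$, no subgroups of $2$-power index in the simple group of order $n_0$), shows that every subgroup of $2$-power index in an insolvable group of order $2^rn_0$ is insolvable. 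Then $(H,M)$ is a counterexample at a strictly smaller power of two, with base case $n=n_0$ settled by Proposition~\ref{prop2}(a) since the unique insolvable group of order $n_0$ is simple. This descent through characteristic subgroups of $N$ is exactly the uniform-in-$r$ mechanism your proposal names as the principal obstacle but does not supply.
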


\begin{remark}\label{remark1}The numbers $n_0$ in Theorem~\ref{main thm2} are significant because
\begin{align}\notag
|A_5| & = 2^2\cdot 3\cdot 5,\\\notag
|\PSL_2(17)| &= 2^4\cdot 3^2\cdot 17,\\\label{Sz order}
|\Sz(2^{2m+1})| & = 4^{2m+1}(4^{2m+1}+1)(2^{2m+1}-1)\mbox{ for $m\in\bN$},
\end{align}
where $\Sz(-)$ denotes the Suzuki groups \cite{Sz}, and there is a unique insolvable group of order $n_0$ which is non-abelian simple; see Lemmas~\ref{lem1'} and~\ref{lem2'}. Also, the key is that they satisfy the special conditions in Theorem~$\ref{general thm}$ below.
\end{remark}

\begin{remark}\label{remark2}Let $\ell_0$ be an odd prime and let us discuss how often
\[(4^{\ell_0}+1)(2^{\ell_0}-1)\]
is square-free. Note that $4^5+1$ is divisible by $25$, so let us assume that $\ell_0\neq5$. 

\vspace{1.5mm}

Suppose that $p$ is a prime and $p^2$ divides $(4^{\ell_0}+1)(2^{\ell_0}-1)$. Clearly $p\geq5$ and $p$ cannot divide both $4^{\ell_0}+1$ and $2^{\ell_0}-1$. We shall show that $p$ must be a Wieferich prime, namely $2^{p-1}\equiv1$ (mod $p^2$). We thank one of the referees for pointing out this relation with Wieferich primes. If
\[ 2^{\ell_0}\equiv1\mbox{ (mod $p^2$)},\]
then $\ell_0\mid p-1$ and $p$ is clearly a Wieferich prime. If
\[ 4^{\ell_0}\equiv-1\mbox{ (mod $p^2$)},\mbox{ and in particular }2^{4\ell_0}\equiv16^{\ell_0}\equiv1\mbox{ (mod $p^2$)},\]
then $-1$ is a square mod $p$ and $4\mid p-1$. Since $4^{10}\equiv1$ (mod $25$) and $\ell_0\neq5$, it also implies that $p\neq5$. Thus, we have $p\geq 7$ and so $16\not\equiv1$ (mod $p$). Then, it follows that $\ell_0\mid p-1$, whence $4\ell_0\mid p-1$ and we see that $p$ is a Wieferich prime.

\vspace{1.5mm}

Except $1093$ and $3511$, there is no Wieferich prime less than $4\times 10^{12}$ by \cite{Wprimes}. This suggests that $(4^{\ell_0}+1)(2^{\ell_0}-1)$ is square-free for most $\ell_0\geq7$, if not all.
\end{remark}

In Section~\ref{sec algorithm}, we shall also present an algorithm which may be used to show that Conjecture~\ref{conjecture} holds for any given $n\in\bN$, given that all finite groups of order $n$ have been classified. By implementing our algorithm in {\sc Magma} \cite{magma} and using the {\sc SmallGroups} Library \cite{SGlibrary}, we verified that:

\begin{thm}\label{main thm3}Conjecture~$\ref{conjecture}$ holds when $n\leq 2000$.
\end{thm}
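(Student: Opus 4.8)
The plan is computational: it rests on the algorithm of Section~\ref{sec algorithm} together with the \textsc{SmallGroups} Library \cite{SGlibrary}, with the necessary conditions of Section~\ref{criteria sec} doing the heavy lifting. First I would make the reduction that shrinks the problem to a manageable size. A counterexample to Conjecture~\ref{conjecture} at order $n$ requires $G$ to be insolvable, so it suffices to treat those $n\leq 2000$ for which an insolvable group of order $n$ actually exists; for every other $n$ the statement is vacuous. An insolvable group of order $n$ exists precisely when $n$ is divisible by the order of some non-abelian simple group, and by Burnside's $p^{a}q^{b}$ theorem every such order has at least three distinct prime divisors. The complete list of non-abelian simple orders at most $2000$ is $\{60,168,360,504,660,1092\}$, so the relevant $n$ are exactly the multiples of these six numbers up to $2000$. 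In particular the orders $2^{10}=1024$ and $2^{9}\cdot3=1536$, which otherwise dominate the library in sheer count, are a $2$-group and a biprimary (hence solvable) order respectively, carry no insolvable group, and may be discarded at once.

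For each of the finitely many admissible orders $n$, I would enumerate all groups of order $n$ from the library, partition them into the solvable ones (candidates for $N$) and the insolvable ones (candidates for $G$), and run through the pairs $(G,N)$. The first line of attack on each pair is the battery of necessary conditions from Section~\ref{criteria sec}, reinforced by Theorem~\ref{thm1} and Proposition~\ref{prop2}. These are cheap invariant tests — comparisons of Sylow data, chief factors, and the structural constraints that an insolvable $G$ forces on $N$ — and are designed to eliminate the overwhelming majority of pairs outright. Only for a pair surviving all of the criteria would I invoke the full routine, which decides directly whether $\E'(G,N)=\varnothing$ by searching $\Hol(N)$ for a regular subgroup isomorphic to $G$. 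If every surviving pair yields the empty set for each admissible $n$, Conjecture~\ref{conjecture} is confirmed in the range and Theorem~\ref{main thm3} follows.

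The hard part is feasibility rather than correctness. Even after removing the prime-power and biprimary orders, several admissible orders remain genuinely large: for instance $2^{5}\cdot3^{2}\cdot5=1440$, $2^{4}\cdot3\cdot5\cdot7=1680$, and above all $2^{7}\cdot3\cdot5=1920$ carry between roughly $10^{4}$ and $10^{5}$ isomorphism types, so a naive double loop over $(G,N)$ with a holomorph search for each pair is out of reach. The whole scheme therefore depends on the criteria of Section~\ref{criteria sec} being sharp enough that, for essentially every solvable $N$ of these orders, they already exclude every insolvable $G$, collapsing the expensive search inside $\Hol(N)$ — a group of order $|N|\cdot|\Aut(N)|$ — down to a negligible number of residual cases. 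Confirming that the criteria prune as expected, and that the implementation disposes of those few residual cases within reasonable time and memory, is where the real work lies; this was carried out in \textsc{Magma} \cite{magma}.
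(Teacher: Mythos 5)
Your proposal is correct in principle and shares the paper's overall strategy (a finite computation over the non-solvable orders $n\leq 2000$, using the {\sc SmallGroups} Library with the necessary criteria of Section~\ref{criteria sec} as filters), but it is organized in a genuinely different way, and the difference is precisely where the paper locates the difficulty. You loop over pairs $(G,N)$ and hold a direct search of $\Hol(N)$ for a regular subgroup isomorphic to $G$ in reserve for surviving pairs. The paper explicitly rejects the pairwise loop as inefficient: instead it fixes each solvable $N$ and aggregates over all insolvable $G$ of order $n$ at once, precomputing the sets $\mathcal{L}_1(n)$ and $\mathcal{L}_2(n)$ of orders of (solvable) subgroups of insolvable groups of order $n$, and then subjecting each $N$ to cheap per-$N$ tests: $|\Fit(N)|\in\mathcal{L}_1(n)$ (Proposition~\ref{prop GN}), solvability of $\Aut(N)$ (hence of $\Hol(N)$), a characteristic subgroup of index $2$ combined with induction on $n/2$, $\mathcal{M}(N)\subset\mathcal{L}_2(n)$, and solvability of $\gcd(n,|\Out(N)|)$ (Proposition~\ref{test1}). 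These are subgroup-order and outer-automorphism criteria, not the ``Sylow data and chief factor'' comparisons you gesture at, and Theorem~\ref{thm1} and Proposition~\ref{prop2} play no role in the algorithm. The payoff of the paper's organization is twofold: the pair count never enters (important at $n=1920$, as you note), and the criteria turn out to prune completely, so the expensive fallback you build into your plan is never invoked --- only two groups survive the first tests, {\sc SmallGroup}$(1008,910)$ and {\sc SmallGroup}$(1512,841)$, and both are eliminated by the $\Out(N)$ criterion. Your feasibility worry about the fallback is well founded by the paper's own data: the direct {\tt RegularSubgroups} computation was run only for $n\leq 1000$ and even there the orders $480,672,960$ had to be excluded, so a plan that might need it at $n=1920$ would be at real risk. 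What your version buys in exchange is completeness as a decision procedure: had some $N$ survived all the criteria, the paper too would have been forced into exactly the direct search you provide for, whereas its published argument works only because, as it happens, the pruning is total --- a contingency your proposal correctly identifies as the crux but cannot guarantee in advance.
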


A natural number $n$ is called \emph{solvable} if every group of order $n$ is solvable, and is called \emph{non-solvable} otherwise. Conjecture~\ref{conjecture} is of course trivial when $n$ is a solvable number. Since any multiple of a non-solvable number is again non-solvable, the numbers $n$ in Theorem~\ref{main thm2} are non-solvable by Remark~\ref{remark1}. See \cite[A056866]{OEIS} for a complete list of non-solvable numbers at most $2000$.

\section{Proof of Theorem~\ref{thm1}}\label{proof sec1}


Let $N$ be a finite group and let $\mathcal{G}$ be any regular subgroup of $\Hol(N)$. Let
\[ \proj_\rho : \Hol(N)\longrightarrow \rho(N) \mbox{ and }\proj_\Aut: \Hol(N)\longrightarrow \Aut(N),\]
respectively, denote the projection map and homomorphism afforded by (\ref{Hol(N)}). Since $\mathcal{G}$ is regular, we easily verify that $(\proj_\rho)|_{\mathcal{G}}$ is bijective and that
\[\rho(N)\rtimes \proj_\Aut(\mathcal{G}) = \mathcal{G}\cdot\proj_\Aut(\mathcal{G}).\]
Theorem~\ref{thm1} then follows directly from Lemmas~\ref{lem product} and~\ref{lem properties} below.

\begin{lem}\label{lem product}Let $\Gamma$ be a finite group which is a product of two subgroups $\Delta_1$ and $\Delta_2$, namely, elements of $\Gamma$ are of the shape $\delta_1\delta_2$ with $\delta_1\in\Delta_1,\delta_2\in\Delta_2$.
\begin{enumerate}[(a)]
\item If $\Delta_1$ and $\Delta_2$ are cyclic, then $\Gamma$ is supersolvable.
\item If $\Delta_1$ and $\Delta_2$ are abelian, then $\Gamma$ is metabelian.
\item If $\Delta_1$ and $\Delta_2$ are nilpotent, then $\Gamma$ is solvable.
\end{enumerate}
\end{lem}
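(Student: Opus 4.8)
The three assertions are all classical theorems on factorized groups $\Gamma=\Delta_1\Delta_2$, and my plan is to reduce each to a known result, treating them in order of increasing difficulty. The cleanest starting point is (b). Since $\Delta_1$ and $\Delta_2$ are abelian, every commutator of two elements drawn from the same factor is trivial, so the derived subgroup $\Gamma'$ is generated by the commutators $[\delta_1,\delta_2]$ with $\delta_i\in\Delta_i$. The heart of the matter is to show these generators pairwise commute, so that $\Gamma'$ is abelian and hence $\Gamma$ is metabelian. I would establish this through the standard commutator manipulations underlying It\^o's theorem: rewriting $[\delta_1,\delta_2]$ in the two forms afforded by the factorization and exploiting that conjugation by an element of $\Delta_i$ fixes that factor pointwise. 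This is precisely It\^o's theorem that a product of two abelian subgroups is metabelian, so in practice I would simply invoke it.

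For (a), cyclic subgroups are in particular abelian, so (b) already yields that $\Gamma$ is metabelian; the remaining task is to upgrade ``metabelian'' to ``supersolvable.'' Here I would argue by induction on $|\Gamma|$: pass to a minimal normal subgroup $M$, observe that the factorization descends to $\Gamma/M$, which is again a product of two cyclic subgroups and hence supersolvable by the inductive hypothesis, and then analyze $M$ itself. Using that a minimal normal subgroup of a metabelian group is elementary abelian and that the cyclic factors act on it in a constrained way, one forces $M$ to be cyclic of prime order, which closes the induction. This is the content of Huppert's theorem that a product of two cyclic subgroups is supersolvable, which I would cite.

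Assertion (c) is the genuine obstacle. Unlike (a) and (b), it admits no short commutator argument: that a finite group which is a product of two nilpotent subgroups is solvable is exactly the Kegel--Wielandt theorem. Its proof is a substantial induction resting on a delicate analysis of how the factorization $\Gamma=\Delta_1\Delta_2$ behaves when restricted to normal subgroups and passed to quotients, together with properties of the Fitting subgroup, and it does not reduce to the elementary techniques available for (a) and (b). Accordingly, my plan for (c) is to invoke the Kegel--Wielandt theorem directly rather than attempt to reprove it.

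In summary, the difficulty of the whole lemma is concentrated in (c). Parts (a) and (b) follow from the elementary commutator and induction arguments of Huppert and It\^o, whereas (c) requires the deep Kegel--Wielandt theorem; the main obstacle is that one must rely on this result as a black box, since no self-contained derivation in the spirit of (a) and (b) is available.
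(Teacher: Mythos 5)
Your proposal is correct and takes essentially the same route as the paper, whose entire proof consists of citing the same classical theorems: It\^o's theorem for (b), the supersolvability of bicyclic groups for (a), and the Kegel--Wielandt theorem for (c). The only minor discrepancy is attribution in (a): the paper cites Douglas, who settled the general bicyclic case (Huppert's earlier result treats the odd-order case), but the mathematical content is identical.
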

\begin{proof}This is known, by \cite{Douglas}, \cite{Ito}, and \cite{Kegel}, respectively. \end{proof}

\begin{lem}\label{lem properties}The properties ``cyclic'', ``abelian'', ``nilpotent'', ``supersolvable'', ``metabelian'', ``solvable" are all quotient-closed and subgroup-closed.
\end{lem}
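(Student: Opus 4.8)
The plan is to handle each of the six properties separately, in every case selecting a characterization that makes both closure statements transparent. For the two most elementary properties the arguments are immediate: a subgroup of a cyclic group is cyclic, while a quotient $\langle g\rangle/N$ is generated by the image of $g$, so the cyclic property is preserved; and subgroups and quotients of abelian groups inherit commutativity directly. These observations dispose of ``cyclic'' and ``abelian'' with essentially no computation.

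For ``nilpotent'' and ``solvable'' I would argue through their canonical series. Writing $\gamma_1(G)=G$ and $\gamma_{i+1}(G)=[\gamma_i(G),G]$ for the lower central series, and $G^{(0)}=G$, $G^{(i+1)}=[G^{(i)},G^{(i)}]$ for the derived series, I would invoke the standard inclusions $\gamma_i(H)\le\gamma_i(G)$ and $H^{(i)}\le G^{(i)}$ valid for any $H\le G$, which yield subgroup-closure at once, together with the identities $\gamma_i(G/N)=\gamma_i(G)N/N$ and $(G/N)^{(i)}=G^{(i)}N/N$ valid for any $N\trianglelefteq G$, which yield quotient-closure. Thus whenever the relevant series of $G$ reaches the trivial subgroup, so does the corresponding series of any subgroup or quotient, and the defining property descends.

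For ``supersolvable'' I would use the characterization by a normal series $1=G_0\trianglelefteq G_1\trianglelefteq\cdots\trianglelefteq G_k=G$ in which each $G_i$ is normal in the whole of $G$ and each factor $G_{i+1}/G_i$ is cyclic. Given $H\le G$, the chain $H_i=H\cap G_i$ consists of subgroups normal in $H$ (since each $G_i\trianglelefteq G$), and by the second isomorphism theorem each factor $H_{i+1}/H_i$ embeds into $G_{i+1}/G_i$, hence is cyclic; this gives subgroup-closure. For a quotient, the images $G_iN/N$ form a normal series in $G/N$ whose factors are homomorphic images of the $G_{i+1}/G_i$, hence cyclic. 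Finally, for ``metabelian'' I would use that $G$ has this property exactly when $G'$ is abelian: for $H\le G$ one has $H'\le G'$, and a subgroup of an abelian group is abelian; for $N\trianglelefteq G$ one has $(G/N)'=G'N/N$, which is a homomorphic image of $G'$ and hence abelian.

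There is no deep obstacle here, since each property is visibly determined by a series or by the derived subgroup. The only points demanding mild care are verifying, in the supersolvable case, that the factors of the induced series stay cyclic under intersection and projection (which is exactly where the isomorphism theorems are needed), and correctly identifying the derived subgroup of a quotient in the metabelian case.
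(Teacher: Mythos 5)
Your proposal is correct in all six cases, but it is worth noting that the paper does not actually argue the lemma at all: it declares the cyclic and abelian cases obvious and otherwise delegates to the literature, citing Hall (Theorems 10.3.1 and 10.5.1) for ``nilpotent'' and ``supersolvable'' and Isaacs (3.10 and the discussion after 3.11) for ``metabelian'' and ``solvable''. What you have written is a self-contained expansion of exactly the standard arguments those references contain: the inclusions $\gamma_i(H)\le\gamma_i(G)$ and $H^{(i)}\le G^{(i)}$ together with the identities $\gamma_i(G/N)=\gamma_i(G)N/N$ and $(G/N)^{(i)}=G^{(i)}N/N$ handle nilpotency and solvability; the characterization of metabelian via commutativity of the derived subgroup handles that case, since $H'\le G'$ and $(G/N)'=G'N/N\simeq G'/(G'\cap N)$ is a homomorphic image of $G'$; and for supersolvability, intersecting and projecting a normal series with cyclic factors works as you claim. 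The only steps requiring care all check out: for $H\le G$ each $H\cap G_i$ is normal in $H$ because $G_i\trianglelefteq G$, and the second isomorphism theorem gives the embedding $(H\cap G_{i+1})/(H\cap G_i)\simeq (H\cap G_{i+1})G_i/G_i\le G_{i+1}/G_i$, so the factors remain cyclic; for quotients, $G_iN$ is normal in $G$ (as a product of two normal subgroups), so $G_iN/N\trianglelefteq G/N$, and $G_{i+1}N/G_iN$ is a homomorphic image of $G_{i+1}/G_i$, hence cyclic. What your route buys is a proof readable without consulting Hall or Isaacs; what the paper's citation buys is brevity, since these are entirely classical facts peripheral to the main results.
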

\begin{proof}For ``cyclic'' and ``abelian'', this is obvious. For ``nilpotent'' and ``supersolvable'', a proof may be found in \cite[Theorems 10.3.1 and 10.5.1]{Hall}. As for ``metabelian'' and ``solvable'', see \cite[3.10 and the discussion after 3.11]{Isaacs}.\end{proof}

\section{Criteria for non-emptiness}\label{criteria sec}

Throughout this section, assume that $G$ and $N$ are two finite groups of the same order such that the set $\E'(G,N)$ is non-empty. Then, as noticed in \cite[Proposition 2.1]{Tsang HG}, for example, by (\ref{Hol(N)}) this is equivalent to the existence of
\[ \ff\in\Hom(G,\Aut(N))\mbox{ and bijective }\fg\in\Map(G,N)\]
satisfying the relation
\begin{equation}\label{fg relation}
\fg(\sigma\tau) = \fg(\sigma)\cdot \ff(\sigma)(\fg(\tau)) \mbox{ for all }\sigma,\tau\in G.
\end{equation}
Below, we shall use (\ref{fg relation}) to give two necessary relations between $G$ and $N$, both of which are not very hard to prove. Yet, the criterion in Proposition~\ref{test2} seems to be fairly powerful, and it alone allows us to prove Theorems~\ref{main thm1} and~\ref{main thm2}. Also, let us recall the following useful fact.

\begin{lem}\label{solvable lem}Let $\Gamma$ be a group containing a normal subgroup $\Delta$. Then, the group $\Gamma$ is solvable if and only if both $\Delta$ and $\Gamma/\Delta$ are solvable.
\end{lem}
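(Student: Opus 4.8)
The plan is to argue directly with the derived series. Recall that for any group $\Gamma$ one sets $\Gamma^{(0)} = \Gamma$ and $\Gamma^{(k+1)} = [\Gamma^{(k)},\Gamma^{(k)}]$, and that $\Gamma$ is solvable precisely when $\Gamma^{(n)} = 1$ for some $n$. The entire argument rests on one elementary observation, namely that any homomorphism sends commutators to commutators. From this, a straightforward induction on $k$ yields the two identities I would record at the outset: for a surjection $\pi$ one has $\pi(\Gamma^{(k)}) = \pi(\Gamma)^{(k)}$, and for an inclusion $\Delta\leq\Gamma$ one has $\Delta^{(k)}\leq\Gamma^{(k)}$.

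For the forward direction, suppose $\Gamma^{(n)} = 1$. Applying $\Delta^{(k)}\leq\Gamma^{(k)}$ with $k=n$ gives $\Delta^{(n)} = 1$, so $\Delta$ is solvable. Letting $\pi\colon\Gamma\to\Gamma/\Delta$ be the quotient map, the identity $\pi(\Gamma^{(n)}) = (\Gamma/\Delta)^{(n)}$ forces $(\Gamma/\Delta)^{(n)} = 1$, so $\Gamma/\Delta$ is solvable as well. This is exactly the subgroup- and quotient-closure of solvability already invoked in Lemma~\ref{lem properties}.

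For the converse, suppose $(\Gamma/\Delta)^{(m)} = 1$ and $\Delta^{(n)} = 1$. From $\pi(\Gamma^{(m)}) = (\Gamma/\Delta)^{(m)} = 1$ I conclude $\Gamma^{(m)}\leq\ker\pi = \Delta$. Iterating the derived-series operation a further $n$ times and using its monotonicity under inclusion then gives $\Gamma^{(m+n)} = (\Gamma^{(m)})^{(n)}\leq\Delta^{(n)} = 1$, so $\Gamma$ is solvable.

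I do not expect a genuine obstacle here, since this is the standard closure of solvability under subgroups, quotients, and extensions; the statement is of the same routine character as the facts cited in Lemma~\ref{lem properties}. The only point requiring mild care is the bookkeeping in the converse, namely correctly seeing that the derived series of $\Gamma$ descends into $\Delta$ after $m$ steps and then terminates within a further $n$ steps. This is precisely why I would isolate the two identities $\pi(\Gamma^{(k)}) = \pi(\Gamma)^{(k)}$ and $\Delta^{(k)}\leq\Gamma^{(k)}$ explicitly before beginning, so that both directions reduce to a transparent comparison of derived-series lengths.
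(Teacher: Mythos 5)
Your proof is correct and complete: the two identities $\pi(\Gamma^{(k)})=\pi(\Gamma)^{(k)}$ and $\Delta^{(k)}\leq\Gamma^{(k)}$ are properly established by induction from the fact that homomorphisms preserve commutators, and the bookkeeping $\Gamma^{(m+n)}=(\Gamma^{(m)})^{(n)}\leq\Delta^{(n)}=1$ in the converse is exactly right. The paper does not prove this lemma at all --- it simply cites it as a standard result from Isaacs \cite{Isaacs} --- and your derived-series argument is precisely the standard textbook proof that the citation points to, so there is nothing further to compare.
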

\begin{proof}This is a standard result; see \cite[3.10]{Isaacs}, for example.\end{proof}

To state the first criterion, let $\Inn(N)$ and $\Out(N)$, denote the inner and outer automorphism groups of $N$, respectively. Let $\uppi:\Aut(N)\longrightarrow\Out(N)$ denote the natural quotient map with kernel equal to $\Inn(N)$. Then, we have:

\begin{prop}\label{test1}If $G$ is insolvable and $N$ is solvable, then $(\uppi\circ\ff)(G)$ is an insolvable subgroup of $\Out(N)$.
\end{prop}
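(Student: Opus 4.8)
The plan is to exploit the factorization relation (\ref{fg relation}) together with the projection structure of $\Hol(N)$ to transfer insolvability from $G$ into $\Out(N)$. The map $\ff\in\Hom(G,\Aut(N))$ already records the $\Aut(N)$-component of the regular embedding, so the natural candidate for producing an insolvable subgroup of $\Out(N)$ is precisely $(\uppi\circ\ff)(G)$, where composing with $\uppi$ kills the inner part coming from $\Inn(N)\simeq N/Z(N)$, which is solvable since $N$ is.

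First I would consider the homomorphism $\uppi\circ\ff : G\longrightarrow\Out(N)$ and let $K = \ker(\uppi\circ\ff)\trianglelefteq G$. The goal is to show $G/K \simeq (\uppi\circ\ff)(G)$ is insolvable, equivalently that $K$ is solvable, and then invoke Lemma~\ref{solvable lem}: if both $K$ and $G/K$ were solvable, then $G$ would be solvable, contradicting the hypothesis. So the whole proposition reduces to showing that $K$ is solvable. By definition, $K$ consists of those $\sigma\in G$ with $\ff(\sigma)\in\Inn(N)$, so $\ff(K)\leq\Inn(N)$, and I would aim to realize $K$ as an extension whose layers are all solvable.

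The key step is to analyze the restriction of the data $(\ff,\fg)$ to $K$. Since $\ff(K)\leq\Inn(N)$, the relation (\ref{fg relation}) restricted to $K$ should let me compare $\fg$ on $K$ with a crossed-homomorphism-type map into $N$, modulo $Z(N)$; more concretely, writing each $\ff(\sigma)$ for $\sigma\in K$ as conjugation by some $n_\sigma\in N$ (well-defined up to $Z(N)$), the cocycle-type identity (\ref{fg relation}) forces a controlled relationship between $\fg(\sigma)$ and $n_\sigma$. I expect this to show that $K$ maps, with solvable kernel, into a group built out of $N$ and $\Inn(N)$ — both of which are solvable because $N$ is. Running this through Lemma~\ref{solvable lem} repeatedly (or equivalently observing that $K$ is an extension of subquotients of solvable groups) yields that $K$ is solvable.

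The hard part will be making the second step precise: correctly identifying what subgroup or subquotient of a solvable group $K$ embeds into, and verifying that the ambiguity in choosing the inner-automorphism witnesses $n_\sigma$ (the $Z(N)$-indeterminacy) does not obstruct the argument. I would handle this by working with $\proj_\rho$ and $\proj_\Aut$ on the regular subgroup $\mathcal{G}\in\E'(G,N)$ directly rather than with the abstract $(\ff,\fg)$, since the regularity of $\mathcal{G}$ (so that $(\proj_\rho)|_{\mathcal{G}}$ is bijective, as noted at the start of Section~\ref{proof sec1}) gives a clean bijective bookkeeping of elements of $K$ against elements of $N$. The payoff is that the preimage in $\mathcal{G}$ of $\Inn(N)$ under $\proj_\Aut$ is contained in $\rho(N)\rtimes\Inn(N)$, a solvable group since $N$ is solvable, so its regular subgroup $K$ is solvable — and that is exactly what is needed.
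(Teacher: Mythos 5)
Your proof is correct, and its skeleton coincides with the paper's: reduce to showing that $K=\ker(\uppi\circ\ff)$ is solvable, then conclude via $G/K\simeq(\uppi\circ\ff)(G)$ and Lemma~\ref{solvable lem}. Where you differ is in how $K$ is shown solvable. The paper stays with the abstract pair $(\ff,\fg)$ and splits $K$ into two layers: by (\ref{fg relation}), $\fg$ restricts to a homomorphism $\ker(\ff)\longrightarrow N$ (injective since $\fg$ is bijective), and $\ff$ embeds $K/\ker(\ff)$ into $\Inn(N)$; both layers are solvable when $N$ is, so $K$ is solvable by Lemma~\ref{solvable lem}. Your final paragraph instead works inside $\Hol(N)$: identifying $G$ with the regular subgroup $\mathcal{G}$ so that $\ff$ becomes $\proj_\Aut|_{\mathcal{G}}$, the subgroup corresponding to $K$ is $\mathcal{G}\cap\bigl(\rho(N)\rtimes\Inn(N)\bigr)$, and $\rho(N)\rtimes\Inn(N)$ is solvable by Lemma~\ref{solvable lem} because $\rho(N)\simeq N$ and $\Inn(N)\simeq N/Z(N)$ are; Lemma~\ref{lem properties} then gives solvability of $K$. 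This is the same two-layer decomposition as the paper's, just in ``holomorph coordinates'', but it buys a small simplification: you never need to verify that $\fg$ restricts to a homomorphism on $\ker(\ff)$, and in fact your argument uses neither the regularity of $\mathcal{G}$ nor the bijectivity of $\fg$ at this point, so it proves the slightly more general fact that for solvable $N$ the preimage of $\Inn(N)$ under $\proj_\Aut$ restricted to \emph{any} subgroup of $\Hol(N)$ is solvable. Two minor blemishes: the phrase ``its regular subgroup $K$'' is a misnomer, since $K$ is merely a subgroup of $\rho(N)\rtimes\Inn(N)$ and subgroup-closedness of solvability is all that is used; and your middle paragraph's analysis via witnesses $n_\sigma$ with their $Z(N)$-ambiguity is entirely superseded by the final argument and should simply be deleted.
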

\begin{proof}Observe that $\ff$ induces an embedding
\[ \ker(\uppi\circ\ff)/\ker(\ff) \longrightarrow \Inn(N)\]
and that $\fg$ restricts to a homomorphism $\ker(\ff)\longrightarrow N$ by (\ref{fg relation}). Hence, if $N$ is solvable, then both $\ker(\ff)$ and $\Inn(N)$ are solvable by Lemma~\ref{lem properties}, and so $\ker(\uppi\circ\ff)$ is solvable by Lemma~\ref{solvable lem}. If $G$ is insolvable in addition, then since
\[ G/\ker(\uppi\circ\ff) \simeq (\uppi\circ\ff)(G),\]
we see that $(\uppi\circ\ff)(G)$ is insolvable, again by Lemma~\ref{solvable lem}.
\end{proof}

To state the second criterion, let us recall that a subgroup $M$ of $N$ is called \emph{characteristic} if $\varphi(M)=M$ for all $\varphi\in\Aut(N)$. In this case, plainly $M$ is normal in $N$, and we shall write
\[\theta_M:\Aut(N)\longrightarrow\Aut(N/M);\hspace{1em}\theta_M(\varphi) = (\eta M \mapsto \varphi(\eta)M)\]
for the natural homomorphism. The use of characteristic subgroups of $N$ is motivated by the arguments in \cite{Byott simple}; also see \cite[Section 4.1]{Tsang HG}. Our main tool is the following proposition; also see Proposition~\ref{prop GN}.

\begin{prop}\label{test2}Let $M$ be any characteristic subgroup of $N$ and define
\[ H = \fg^{-1}(M).\]
Then, this set $H$ is a subgroup of $G$, and $\E'(H,M)$ is non-empty. Moreover, if $N/M$ is solvable and $\ker(\theta_M\circ\ff)$ is insolvable, then $H$ is insolvable.
\end{prop}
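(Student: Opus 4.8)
The plan is to push the defining cocycle relation (\ref{fg relation}) down to the quotient $N/M$. Since $M$ is characteristic, each $\ff(\sigma)$ preserves $M$ and induces $\theta_M(\ff(\sigma))\in\Aut(N/M)$, so upon setting $\overline{\fg}(\sigma) = \fg(\sigma)M$ and reducing (\ref{fg relation}) modulo $M$, one obtains
\[\overline{\fg}(\sigma\tau) = \overline{\fg}(\sigma)\cdot(\theta_M\circ\ff)(\sigma)\bigl(\overline{\fg}(\tau)\bigr)\quad\text{for all }\sigma,\tau\in G,\]
which is again a relation of the form (\ref{fg relation}), now for the group $N/M$. A first observation, obtained by putting $\sigma=\tau=1_G$ in (\ref{fg relation}) and using $\ff(1_G)=\mathrm{id}$, is that $\fg(1_G)=1_N$; hence $1_G\in H$ and $\overline{\fg}(1_G)=1_{N/M}$. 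The key point is that $H=\fg^{-1}(M)$ is precisely the fiber $\overline{\fg}^{-1}(1_{N/M})$.

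Using the reduced relation I would verify directly that $H$ is a subgroup: if $\overline{\fg}(\sigma)=\overline{\fg}(\tau)=1_{N/M}$ then the relation gives $\overline{\fg}(\sigma\tau)=1_{N/M}$, and taking $\tau=\sigma^{-1}$ together with $\overline{\fg}(1_G)=1_{N/M}$ forces $\overline{\fg}(\sigma^{-1})=1_{N/M}$ because $(\theta_M\circ\ff)(\sigma)$ is an automorphism. To see that $\E'(H,M)$ is non-empty I would restrict the data: set $\ff'(\sigma)=\ff(\sigma)|_M$, which lies in $\Aut(M)$ since $M$ is characteristic and which defines a homomorphism $H\to\Aut(M)$, and set $\fg'=\fg|_H$, which maps $H$ into $M$ by the definition of $H$. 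For $\sigma,\tau\in H$ all the relevant values lie in $M$, so (\ref{fg relation}) restricts to the same relation for the pair $(\ff',\fg')$. Finally $\fg'$ is injective and, since $\fg$ is a bijection $G\to N$, the fiber $H=\fg^{-1}(M)$ has exactly $|M|$ elements, so $\fg'$ is a bijection $H\to M$; this furnishes an element of $\E'(H,M)$.

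For the last assertion I would exploit that the reduced relation collapses to a homomorphism on $K=\ker(\theta_M\circ\ff)$. Indeed, for $\sigma,\tau\in K$ we have $(\theta_M\circ\ff)(\sigma)=\mathrm{id}$, so the reduced relation becomes $\overline{\fg}(\sigma\tau)=\overline{\fg}(\sigma)\cdot\overline{\fg}(\tau)$; thus $\overline{\fg}|_K:K\to N/M$ is a group homomorphism with kernel $K\cap H$. Hence $K\cap H$ is normal in $K$ and $K/(K\cap H)$ embeds into $N/M$. If $N/M$ is solvable then $K/(K\cap H)$ is solvable by Lemma~\ref{lem properties}, so if moreover $K=\ker(\theta_M\circ\ff)$ is insolvable, Lemma~\ref{solvable lem} forces $K\cap H$ to be insolvable. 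Since $K\cap H$ is a subgroup of $H$, Lemma~\ref{lem properties} then shows that $H$ is insolvable.

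The routine parts are the subgroup check and the bijectivity count; the genuinely useful idea, and the step I would highlight, is the observation that $\overline{\fg}$ becomes an honest homomorphism once restricted to $K=\ker(\theta_M\circ\ff)$, which is what converts solvability information about $N/M$ into the insolvability of $K\cap H$ and hence of $H$. The only care needed is to confirm the normality of $K\cap H$ in $K$, which is automatic as it is a kernel, so that Lemma~\ref{solvable lem} applies.
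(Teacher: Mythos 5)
Your proposal is correct and follows essentially the same route as the paper: you restrict $(\ff,\fg)$ to $H$ and $M$ to witness that $\E'(H,M)\neq\emptyset$, and you use the fact that $\overline{\fg}$ is a homomorphism on $\ker(\theta_M\circ\ff)$ to embed $\ker(\theta_M\circ\ff)/\bigl(\ker(\theta_M\circ\ff)\cap H\bigr)$ into $N/M$, then conclude via Lemmas~\ref{solvable lem} and~\ref{lem properties} exactly as the authors do. The only difference is cosmetic: the paper cites \cite[Lemma 4.1]{Tsang HG} for the subgroup and homomorphism facts that you verify directly from (\ref{fg relation}).
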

\begin{proof}The set $H$ is a subgroup of $G$ by (\ref{fg relation}); see \cite[Lemma 4.1]{Tsang HG}. Also, we have a homomorphism
\[ \res(\ff) : H \longrightarrow \Aut(M);\hspace{1em}\res(\ff)(\sigma) = \ff(\sigma)|_M\]
induced by $\ff$ since $M$ is characteristic, and also a bijective map
\[ \res(\fg): H \longrightarrow M;\hspace{1em}\res(\fg)(\sigma) = \fg(\sigma)\]
induced by $\fg$ since $\fg$ is bijective. Clearly, it follows directly from (\ref{fg relation}) that
\[ \res(\fg)(\sigma\tau) = \res(\fg)(\sigma)\cdot (\res(\ff)(\sigma))(\res(\fg)(\tau))\mbox{ for all }\sigma,\tau\in H.\]
Then, by \cite[Proposition 2.1]{Tsang HG}, which is a consequence of (\ref{Hol(N)}), this implies that $\E'(H,M)$ is non-empty. This proves the first statement. 

\vspace{1.5mm}

Next, as noted in \cite[Lemma 4.1]{Tsang HG}, the relation (\ref{fg relation}) implies that
\[ \ker(\theta_M\circ\ff) \longrightarrow N/M;\hspace{1em}\sigma\mapsto \fg(\sigma)M\]
induced by $\fg$ is a homomorphism, and so we have an embedding
\[ \frac{\ker(\theta_M\circ\ff)}{\ker(\theta_M\circ\ff)\cap H}\longrightarrow N/M.\]
Thus, if $N/M$ is solvable and $\ker(\theta_M\circ\ff)$ is insolvable, then $ \ker(\theta_M\circ\ff)\cap H$ must be insolvable by Lemma~\ref{solvable lem}, which in turn implies that $H$ is insolvable by Lemma~\ref{lem properties}. The second statement then follows.
\end{proof}

Although Proposition~\ref{test2} is valid for any characteristic subgroup $M$ of $N$, motivated by \cite{Byott simple}, we shall consider the case when $M$ is a (proper) maximal characteristic subgroup of $N$. In this case, the quotient $N/M$ is a non-trivial characteristically simple group, and so we know that 
\[N/M\simeq T^m,\mbox{ where $T$ is a simple group and $m\in\bN$}.\]
Hence, if $N$ is solvable, then
\begin{equation}\label{N/M}N/M\simeq(\bZ/p\bZ)^m\mbox{ and in particular }\Aut(N/M)\simeq\GL_m(p),
\end{equation}
where $p$ is a prime. The following is well-known.

\begin{lem}\label{GL lem}For any prime $p$ and $m\in\bN$, the group $\GL_m(p)$ is solvable if and only if $m=1$ or $m=2$ with $p\leq3$.
\end{lem}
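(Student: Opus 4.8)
The plan is to prove the two implications separately: the ``if'' direction by direct inspection of the three listed cases, and the ``only if'' direction by reducing to the simplicity of the projective special linear groups.

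For the ``if'' direction, I would verify solvability case by case. When $m=1$, the group $\GL_1(p)\cong(\bZ/p\bZ)^\times$ is cyclic of order $p-1$, hence solvable. When $m=2$ and $p=2$, the group $\GL_2(2)$ has order $(2^2-1)(2^2-2)=6$ and is in fact isomorphic to $S_3$ via its action on the three non-zero vectors of $\bF_2^2$, so it is solvable. When $m=2$ and $p=3$, the group $\GL_2(3)$ has order $(3^2-1)(3^2-3)=48$; since every finite group of order less than $60$ is solvable (the group $A_5$ being the smallest non-abelian simple group), it is solvable. This settles one implication.

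For the ``only if'' direction, I would argue the contrapositive: whenever $m\geq 3$, or $m=2$ with $p\geq 5$, the group $\GL_m(p)$ is insolvable. The key input is the classical theorem of Jordan and Dickson that $\PSL_m(p)=\SL_m(p)/Z(\SL_m(p))$ is a non-abelian simple group in precisely these cases, the only exceptions among the two-dimensional groups being $\PSL_2(2)\simeq S_3$ and $\PSL_2(3)\simeq A_4$. Granting this, note that $\SL_m(p)$ is a subgroup of $\GL_m(p)$ and that $\PSL_m(p)$ is a quotient of $\SL_m(p)$. Since solvability is both subgroup-closed and quotient-closed by Lemma~\ref{lem properties}, if $\GL_m(p)$ were solvable then so would be $\PSL_m(p)$; but a non-abelian simple group is insolvable, a contradiction. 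Hence $\GL_m(p)$ is insolvable in all these cases, which completes the equivalence.

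The only real obstacle is invoking the simplicity of $\PSL_m(p)$ together with the correct list of exceptions; this is entirely standard and I would cite it rather than reprove it. Everything else reduces to the order computation $|\GL_2(3)|=48$ and the elementary closure properties already recorded in Lemma~\ref{lem properties}, so I expect no genuine difficulty.
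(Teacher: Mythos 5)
Your proof is correct. The paper itself offers no proof of this lemma---it is stated with the remark that it is well-known---so there is nothing to compare against; what you supply is precisely the standard justification: direct inspection of $\GL_1(p)$, $\GL_2(2)\simeq S_3$ and $\GL_2(3)$ for the ``if'' direction, and for the ``only if'' direction the Jordan--Dickson simplicity of $\PSL_m(p)$ (with the exceptions $\PSL_2(2)$ and $\PSL_2(3)$) combined with the subgroup- and quotient-closure of solvability from Lemma~\ref{lem properties}. One minor streamlining: for $\GL_2(3)$ you need not invoke the comparatively heavy fact that every group of order less than $60$ is solvable, since $|\GL_2(3)|=48=2^4\cdot 3$ is divisible by only two primes and hence solvable by Burnside's $p^aq^b$ theorem, a result the paper already uses elsewhere (Lemma~\ref{facts lem}(a)).
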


\section{Proof of Theorem~\ref{main thm1}}\label{cubefree sec}

Suppose for contradiction that the claim is false and let $n$ be the smallest cube-free number for which Conjecture~\ref{conjecture} fails. Let $G$ and $N$ be two groups of order $n$ satisfying (\ref{GN conditions}). Let $M$ be any proper and maximal characteristic subgroup of $N$. Clearly $M$ is solvable because $N$ is solvable. As in (\ref{N/M}), we then know that
\[ N/M \simeq (\bZ/p\bZ)^m,\mbox{ where $p$ is a prime and $m\in\bN$}.\]
Notice that $|M| = n/p^m$ and that $m=1,2$ because $n$ is cube-free. Hence, by Lemma~\ref{insol kernel} (b) below, the kernel of any homomorphism $G\longrightarrow\Aut(N/M)$ is insolvable. From Proposition~\ref{test2}, it follows that $\E'(H,M)$ is non-empty for some insolvable subgroup $H$ of $G$ of the same order as $M$. This contradicts the minimality of $n$ and so Theorem~\ref{main thm1} must be true.

\begin{lem}\label{insol kernel}Let $p$ be any prime and let $m=1,2$.
\begin{enumerate}[(a)]
\item The group $\GL_m(p)$ has no non-abelian simple subgroup.
\item The kernel of a homomorphism from a finite insolvable group of cube-free order to $\GL_m(p)$ is insolvable.
\end{enumerate}
\end{lem}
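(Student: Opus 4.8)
The plan is to prove both parts by a careful analysis of the subgroup structure of $\GL_m(p)$ for $m=1,2$, reducing part (b) to part (a). First I would dispose of part (a). When $m=1$, the group $\GL_1(p)\simeq\bZ/(p-1)\bZ$ is cyclic, hence abelian, so it certainly contains no non-abelian simple subgroup. When $m=2$, I would argue that any non-abelian simple subgroup $S\leq\GL_2(p)$ must inject into $\PSL_2(p)$: the only non-abelian simple subgroups of $\GL_2(p)$ are forced to lie in (a conjugate of) $\SL_2(p)$, and after passing to $\PSL_2(p)$ one uses the classification of subgroups of $\PSL_2(p)$ due to Dickson. The nontrivial point is to rule out the possibility that $S$ meets the center; since the center of $\SL_2(p)$ has order at most $2$ and $S$ is simple, either $S\cap Z=1$ or $S$ would have a central subgroup of order $2$ contradicting simplicity (for $|S|>2$). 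Then Dickson's list shows the only non-abelian simple candidate inside $\PSL_2(p)$ coming from a $2$-dimensional representation would have to be $\PSL_2(p)$ itself or $A_5$, and a quick order and representation-degree check shows no such $S$ embeds in $\GL_2(p)$ in a way consistent with simplicity. I expect the cleanest route is to invoke the known classification of finite subgroups of $\GL_2(p)$ directly.

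For part (b), I would let $\phi:G\to\GL_m(p)$ be a homomorphism with $G$ finite, insolvable, and of cube-free order, and suppose for contradiction that $K=\ker(\phi)$ is solvable. Then $G/K\simeq\phi(G)$ embeds in $\GL_m(p)$, and by Lemma~\ref{solvable lem} the insolvability of $G$ forces $\phi(G)$ to be insolvable. The strategy is to derive a non-abelian simple subquotient of $\phi(G)$ and then contradict part (a). Since $\phi(G)$ is an insolvable subgroup of $\GL_m(p)$, its derived series stabilizes at a non-trivial perfect subgroup, and a suitable composition factor of $\phi(G)$ is non-abelian simple. The key obstacle is that part (a) forbids a non-abelian simple \emph{subgroup}, not a \emph{subquotient}, of $\GL_m(p)$; so I must promote the subquotient to an actual subgroup.

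To bridge this gap, I would use the cube-free hypothesis on $|G|$, which passes to $|\phi(G)|$ since $|\phi(G)|$ divides $|G|$. For a group of cube-free order, every Sylow subgroup has order dividing $p^2$, and such groups have restricted structure: in particular, I would argue that a minimal insolvable subgroup of $\phi(G)$, or more precisely the last term of its derived series, is a non-abelian simple group realized as an actual subgroup. Concretely, letting $L$ be the final nontrivial term of the derived series of $\phi(G)$, the group $L$ is perfect; its unique maximal normal subgroup yields a simple quotient, but I instead exploit that in a cube-free order group a perfect group with a single non-abelian composition factor must itself be non-abelian simple. This is the technical heart: Schur multipliers of the relevant simple groups, combined with the cube-free constraint on the order (which caps the multiplicity of each prime at two), force the perfect group $L$ to equal its own simple quotient, so $L\leq\GL_m(p)$ is a non-abelian simple subgroup, contradicting part (a).

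The main obstacle, as indicated, is the subquotient-versus-subgroup issue in part (b): translating an abstract non-abelian simple composition factor into a genuine non-abelian simple subgroup of $\GL_m(p)$. I expect this to hinge on the cube-free hypothesis bounding Sylow orders together with vanishing or smallness of the relevant Schur multipliers, so that no nontrivial perfect central extension of the simple factor can fit inside a cube-free order group. If a clean self-contained argument proves elusive, the fallback is to enumerate, via part (a) and the short list of non-abelian simple groups embeddable (as subquotients) in $\GL_1(p)$ and $\GL_2(p)$, exactly which simple groups can arise and check each against the cube-free constraint directly.
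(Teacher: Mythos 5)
Your part (a) follows essentially the paper's route: the determinant homomorphism forces a non-abelian simple subgroup into $\SL_2(p)$, and the paper then concludes either by observing that for $p$ odd the unique involution of $\SL_2(p)$ is the central element $-I$ (so Feit--Thompson hands the simple subgroup a non-trivial centre), or by citing the classification of subgroups of $\SL_2(p)$. Your detour through $\PSL_2(p)$ is sound as far as it goes --- the observation that $S\cap Z(\SL_2(p))=1$ is correct --- but the final ``quick order and representation-degree check'' you defer is never performed, and when one tries to perform it one lands exactly on the involution argument: $S$ has even order, hence contains an involution, which must be $-I\in Z(\SL_2(p))$, contradicting $S\cap Z=1$. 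So (a) is fine, if slightly roundabout.

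Part (b) is where there is a genuine gap. The paper's proof is one line: by Dietrich and Eick \cite{cubefree}, \emph{every insolvable group of cube-free order contains a non-abelian simple subgroup}, and this, with part (a) and Lemma~\ref{solvable lem}, finishes. You attempt to re-derive this structural fact by hand, and the manufacturing step fails as written. You let $L$ be the last non-trivial term of the derived series of $\phi(G)$ and assert that cube-freeness plus Schur multipliers force $L$ to equal its simple quotient. Two issues: the ``single non-abelian composition factor'' claim is true but unproved (it follows since every non-abelian simple group has order divisible by $4$, so two such factors would force $16$ to divide the cube-free $|G|$); more seriously, a Schur-multiplier argument controls only perfect \emph{central} extensions, i.e.\ quasisimple groups, whereas a perfect group with one non-abelian composition factor may a priori have a \emph{non-central} solvable radical (a perfect extension of a simple group by a non-trivial module), and nothing in your sketch rules this out. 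Closing the hole needs a further idea, for instance: induct on a minimal normal subgroup $M$ of the insolvable group, which by cube-freeness is elementary abelian of rank at most $2$, note that the conjugation action maps into $\GL_m(q)$ with $m\leq 2$, and invoke part (a) itself to force the simple piece to act trivially --- only then is the extension central and the multiplier computation (the cube-free simple groups are of the form $\PSL_2(r)$, whose double covers have order divisible by $8$) available. Carried out in full, this would re-prove the portion of \cite{cubefree} that the paper cites; as written, the passage from subquotient to subgroup, which you correctly identify as the crux, is not achieved. Your fallback is also off-target: by part (a) there are no simple subgroups of $\GL_2(p)$ to enumerate, while $\PSL_2(p)$ genuinely occurs as a simple \emph{subquotient} of $\GL_2(p)$, so enumerating subquotients yields no contradiction by itself. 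Incidentally, a short self-contained proof of (b) avoiding \cite{cubefree} does exist along lines close to your (a): for $p$ odd, $Y=\phi(G)\cap\SL_2(p)$ is insolvable (the quotient by it is abelian), hence of even order and containing $-I$; by Dickson its image in $\PSL_2(p)$ must be $A_5$ or all of $\PSL_2(p)$, so $|Y|$ is divisible by $8$, contradicting cube-freeness.
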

\begin{proof}For $m=1$ or $p=2$, the group $\GL_m(p)$ is solvable by Lemma~\ref{GL lem}, and the claims hold by Lemmas~\ref{lem properties} and~\ref{solvable lem}. For $m=2$ and $p$ odd, first suppose for contradiction that $\GL_2(p)$ has a subgroup $A$ which is non-abelian simple. Observe that the homomorphism
\[ \begin{tikzcd}[column sep = 2cm]
A\arrow{r}{\tiny\mbox{inclusion}} & \GL_2(p) \arrow{r}{\tiny\mbox{determinant}} & (\bZ/p\bZ)^\times
\end{tikzcd}\]
must be trivial, and so $A$ is in fact a subgroup of $\SL_2(p)$. Also, note that $A$ has an element of order two by the Feit-Thompson theorem. Since $p$ is odd, the matrix  $\left(\begin{smallmatrix}-1&0\\ 0 & -1\end{smallmatrix}\right)$, which lies in the center, is the only element in $\SL_2(p)$ of order two. It follows that $A$ has non-trivial center, which is a contradiction. Alternatively, the subgroups of $\SL_2(p)$ have been classified; see \cite[Theorem 6.17]{Suzuki book}. None of the groups listed there  are non-abelian simple, and we obtain a contradiction. We thank one of the referees for bringing Dickson's result on the subgroups of $\PSL_2(p)$ to our attention, which led us to this simpler proof which does not use the Feit-Thompson theorem. This proves part (a). Since any insolvable group of cube-free order has a non-abelian simple subgroup by \cite{cubefree}, we see that part (b) follows from part (a) and Lemma~\ref{solvable lem}.
\end{proof}

\section{Almost square-free orders}\label{sqfree sec}

In this section, we shall prove Theorem~\ref{main thm2}. First, let us prove the following more general statement.

\begin{thm}\label{general thm}Suppose that $n_0 = 2^{r_0}\cdot 3^{\ep_0}\cdot p_1\cdots p_{k_0}$, where
\[r_0,k_0\in\mathbb{N}_{\geq0},\, \ep_0\in\{0,1,2\},\mbox{ and }p_1,\dots,p_{k_0}\geq5\mbox{ are distinct primes},\]
and that Conjecture~\ref{conjecture} holds when $n = n_0$. Assume that the following hold.
\begin{enumerate}[1.]
\item the subgroups of index a power of two of an insolvable group of order $n_0$ are all insolvable;
\item there is no non-abelian simple group of order $2^r\cdot n_0$ for $r\in\bN$;
\item the number $n_0/2$ is solvable in the case that $n_0$ is even;
\item the numbers $(2^r\cdot n_0)/p$, where $p$ ranges over the odd primes dividing $n_0$, are all solvable for $r\in\bN_{\geq0}$.
\end{enumerate}
Then Conjecture~\ref{conjecture} also holds when $n = 2^r\cdot n_0$ for any $r\in\bN$.
\end{thm}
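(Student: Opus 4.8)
The plan is to argue by contradiction, taking $(G,N)$ to be a counterexample with $|G|=|N|=2^r\cdot n_0$ of least possible $r\geq 1$; by the assumption that Conjecture~\ref{conjecture} holds for $n=n_0$, this also serves as the base of an induction on $r$. Since $\E'(G,N)$ is non-empty, I would fix $\ff\in\Hom(G,\Aut(N))$ and bijective $\fg\in\Map(G,N)$ satisfying (\ref{fg relation}). Let $M$ be a proper maximal characteristic subgroup of $N$; as $N$ is solvable, (\ref{N/M}) gives $N/M\simeq(\bZ/p\bZ)^m$ with $\Aut(N/M)\simeq\GL_m(p)$. The aim at each stage is to verify the hypothesis of the second part of Proposition~\ref{test2}, namely that $\ker(\theta_M\circ\ff)$ is insolvable: granting this, Proposition~\ref{test2} furnishes an insolvable subgroup $H=\fg^{-1}(M)$ of $G$ with $\E'(H,M)$ non-empty and $|H|=|M|=n/p^m$. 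I would then reach a contradiction by showing that $|M|$ is \emph{either} a solvable number (impossible, since $H$ is insolvable) \emph{or} of the form $2^{r'}\cdot n_0$ with $r'<r$ (impossible, by the minimality of $r$) \emph{or} equal to $n_0$ (impossible, by the hypothesis on $n_0$).

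Since $G$ is insolvable, $\ker(\theta_M\circ\ff)$ is insolvable as soon as its quotient $G/\ker(\theta_M\circ\ff)$, which embeds into $\GL_m(p)$, is solvable (Lemma~\ref{solvable lem}); by Lemma~\ref{GL lem} this is automatic whenever $\GL_m(p)$ itself is solvable, i.e. whenever $m=1$, or $m=2$ with $p\leq 3$. First I would dispose of the odd primes. If $p=p_i\geq 5$ then $m=1$, as $p_i$ divides $n_0$ exactly once, and $|M|=n/p_i$ is solvable by hypothesis~4; if $p=3$ then $m\leq 2$, with $|M|=n/3$ solvable by hypothesis~4 when $m=1$, and $|M|=n/9$ dividing the solvable number $n/3$---hence itself solvable, since divisors of solvable numbers are solvable---when $m=2$. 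In each odd case $\GL_m(p)$ is solvable, so the resulting $H$ would be insolvable of solvable order, a contradiction. Next, when $p=2$ and $m\leq 2$, again $\GL_m(2)$ is solvable; here $|M|=2^{r+r_0-m}\cdot(n_0/2^{r_0})$, which equals $2^{r-m}\cdot n_0$ (handled by minimality or the base case) when $m\leq r$, and divides $n_0/2$ (solvable by hypothesis~3, as $n_0$ is then even) when $m>r$. This settles every case except $p=2$ with $m\geq 3$.

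The hard part will be the case $p=2$, $m\geq 3$, where $\GL_m(2)$ is insolvable (for instance $\GL_3(2)\simeq\PSL_2(7)$), so the image $Q=(\theta_M\circ\ff)(G)\leq\GL_m(2)$, isomorphic to $G/\ker(\theta_M\circ\ff)$, may itself be insolvable; this blocks the automatic deduction that $\ker(\theta_M\circ\ff)$ is insolvable, and it is precisely here that hypotheses~1 and~2 must enter. I would first extract two consequences of the hypotheses. By Remark~\ref{remark1} the unique insolvable group of order $n_0$ is a non-abelian simple group $T$, and hypotheses~1 and~3 together force $T$ to have no proper subgroup of $2$-power index, since such a subgroup would have order dividing $n_0/2$ and hence be solvable by hypothesis~3, against hypothesis~1. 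Moreover hypothesis~4 implies that every insolvable subquotient of $G$ has order divisible by each odd prime dividing $n_0$---otherwise its order would divide some $(2^r\cdot n_0)/p$ and be solvable. The idea is to suppose, for contradiction, that $Q$ is insolvable while $\ker(\theta_M\circ\ff)$ is solvable; then every non-abelian composition factor of $G$ lies inside $Q\leq\GL_m(2)$, and the order constraint just noted, combined with the classification of insolvable subgroups of $\GL_m(2)$, should pin this factor down to $T$ and ultimately produce either a non-abelian simple group of order $2^r\cdot n_0$ (contradicting hypothesis~2) or a proper $2$-power-index subgroup of $T$ (contradicting the first consequence above).

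Making this last dichotomy precise---in particular, controlling the non-abelian composition factors of insolvable subgroups of $\GL_m(2)$ whose order divides $2^r\cdot n_0$---is the step I expect to be the main obstacle, and it is the sole point at which the full strength of hypotheses~1 and~2 is required. Every remaining case rests only on Proposition~\ref{test2}, Lemma~\ref{GL lem}, and hypotheses~3 and~4, so once the $p=2$, $m\geq 3$ case is closed the induction on $r$ completes and Conjecture~\ref{conjecture} follows for all $n=2^r\cdot n_0$.
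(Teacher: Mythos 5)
Your treatment of the odd primes and of $p=2$ with $m\leq 2$ is correct and matches the paper's use of Proposition~\ref{test2}, Lemma~\ref{GL lem}, and conditions 3 and 4 (including the $\ep_0=2$, $p=3$, $m=2$ subtlety, which you handle by noting that a divisor of a solvable number is solvable). But the case you flag as the ``main obstacle''---$p=2$ with $m\geq 3$---is a genuine gap, and the route you sketch for it does not close. Your strategy is to show that $\ker(\theta_M\circ\ff)$ is insolvable so as to invoke the second statement of Proposition~\ref{test2}; for $m\geq 3$ this is the wrong target. Nothing in the hypotheses rules out a priori that $G$ surjects onto an insolvable subgroup $Q$ of $\GL_m(2)$ with solvable kernel: your order bookkeeping can indeed pin the non-abelian composition factor of $Q$ down to the simple group $T$ of order $n_0$, but $T$ (like any finite group) embeds in $\GL_m(2)$ for suitable $m$, so the configuration ``$Q$ insolvable, kernel solvable'' leads to no contradiction from orders and subgroup classification alone. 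You would be left needing to show $H$ is insolvable without the kernel hypothesis, which your argument cannot do.

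The paper's resolution avoids the kernel entirely. For $p=2$ it uses only the \emph{first} statement of Proposition~\ref{test2}: $H=\fg^{-1}(M)$ is a subgroup of $G$ with $|H|=|M|=2^{r-m}\cdot n_0$, hence of index $2^m$, a power of two. The key input is a separate combinatorial induction (Lemma~\ref{index power of two}): for every $r\geq 0$, all subgroups of $2$-power index in an insolvable group of order $2^r\cdot n_0$ are insolvable, and every such group has a non-abelian composition factor of order $n_0$. The base case $r=0$ is exactly condition 1 (together with the observation, from conditions 3 and 4, that an insolvable group of order $n_0$ is non-abelian simple); for $r\geq 1$, condition 2 supplies a non-trivial proper normal subgroup $A$ of $G$, conditions 3 and 4 force the insolvable one of $A$, $G/A$ to have order $2^a\cdot n_0$ with $a<r$, and for a subgroup $H$ of $2$-power index both $[A:A\cap H]$ and $[G/A:AH/A]$ are powers of two, so the induction hypothesis applies to $A\cap H$ or $H/(A\cap H)$ and Lemma~\ref{lem properties} finishes. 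With $H$ insolvable in hand, condition 3 gives $r-m\geq 0$, the hypothesis that Conjecture~\ref{conjecture} holds at $n_0$ gives $r-m\geq 1$, and minimality of $r$ yields the contradiction---no case split on $m$ and no analysis of $\ker(\theta_M\circ\ff)$ is needed for $p=2$. This lemma is precisely the missing idea in your proposal, and it is where conditions 1 and 2 actually enter the proof.
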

\begin{proof}Suppose for contradiction that the four conditions are satisfied but the conclusion is false. Let $r\in\bN$ be the smallest number such that Conjecture~\ref{conjecture} does not hold when $n = 2^r\cdot n_0$. Also, let $G$ and $N$ be two groups of order $n$ satisfying (\ref{GN conditions}). Let $M$ be any proper and maximal characteristic subgroup of $N$. Clearly $M$ is solvable because $N$ is solvable. As in (\ref{N/M}), we have
\[ N/M \simeq (\bZ/p\bZ)^m,\mbox{ where $p$ is a prime and $m\in\bN$}.\]
Notice that $|M| = n/ p^m$. Also, we know from Proposition~\ref{test2} that $\E'(H,M)$ is non-empty for some subgroup $H$ of $G$ of the same order as $M$. 

\vspace{1.5mm}

For $p$ odd, we have $m\leq 2$ if $p=3$ and $m=1$ if $p\geq 5$ by the hypothesis on $n_0$, so $\GL_m(p)$ is solvable by Lemma~\ref{GL lem}. Then, the kernel of any homomorphism $G\longrightarrow\Aut(N/M)$ must be insolvable by Lemma~\ref{solvable lem}, and we may take $H$ to be insolvable by Proposition~\ref{test2}, which contradicts condition 4. In the case that $\ep_0=2$, it is possible that $m=2$ when $p=3$, but note that $2^r\cdot n_0/9$ is also solvable by condition $4$ since a factor of a solvable number is solvable.

\vspace{1.5mm}

For $p=2$, we have $|H| = 2^{r-m}\cdot n_0$, and thus $H$ is insolvable by Lemma~\ref{index power of two} below. Observe that $r-m\geq 0$ by condition 3. Since Conjecture~\ref{conjecture} holds when $n=n_0$ by assumption, we in fact have $r-m\geq1$, which contradicts the minimality of $r$.\end{proof}

\begin{lem}\label{index power of two}Let $n_0\in\bN$ be any integer such that the conditions 1, 2, 3, 4 in Theorem~$\ref{general thm}$ are satisfied. Then, for any $r\in\bN_{\geq0}$, we have:
\begin{enumerate}[(i)]
\item the subgroups of index a power of two of any insolvable group of order $2^r\cdot n_0$ are insolvable;
\item any insolvable group of order $2^r\cdot n_0$ has a non-abelian composition factor of order $n_0$.
\end{enumerate}
\end{lem}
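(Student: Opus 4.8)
My plan is to reduce both parts to a single arithmetic claim about the almost-square-free order $n_0$, after which (ii) is immediate and (i) follows by a short induction on $r$.

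The claim is that \emph{any non-abelian simple group $T$ whose order divides $2^r\cdot n_0$ has order exactly $n_0$}. I would prove this by pinning down the prime factorization of $|T|$ using the shape $n_0=2^{r_0}\cdot 3^{\ep_0}\cdot p_1\cdots p_{k_0}$ together with the principle that a divisor of a solvable number is solvable. For each $p_i\geq5$, if $p_i\nmid|T|$ then $|T|$ divides $(2^r\cdot n_0)/p_i$, which is solvable by condition 4, making $T$ solvable — impossible; so $p_i\mid|T|$, and as $p_i$ occurs to the first power in $n_0$ this is its full $p_i$-part. The same argument applied to $(2^r\cdot n_0)/3$ shows $3^{\ep_0}\mid|T|$ (if $\ep_0=2$, a single factor of $3$ in $|T|$ would again place $|T|$ inside the solvable number $(2^r\cdot n_0)/3$). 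Hence the odd part of $|T|$ is exactly $3^{\ep_0}p_1\cdots p_{k_0}$, so $|T|=2^{s}\cdot 3^{\ep_0}p_1\cdots p_{k_0}$ for some $s$. If $s>r_0$ then $|T|=2^{s-r_0}\cdot n_0$ with $s-r_0\geq1$, contradicting condition 2; if $s<r_0$ then $|T|$ divides $n_0/2$, which is solvable by condition 3. Therefore $s=r_0$ and $|T|=n_0$. Part (ii) is then immediate: an insolvable group $\Gamma$ of order $2^r\cdot n_0$ must have at least one non-abelian composition factor $T$, and since $|T|$ divides $|\Gamma|=2^r\cdot n_0$ the claim forces $|T|=n_0$.

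For part (i) I would induct on $r$, the base case $r=0$ being precisely condition 1. For the step, let $\Gamma$ be insolvable of order $2^r\cdot n_0$ with $r\geq1$, let $\Sigma\leq\Gamma$ have index a power of two, and pick a minimal normal subgroup $K\simeq S^{j}$ of $\Gamma$ with $S$ simple. If $S$ is abelian then $K$ is an elementary abelian $q$-group, and $q$ cannot be an odd prime dividing $n_0$: otherwise $\Gamma/K$ would have order dividing the solvable number $(2^r\cdot n_0)/q$ by condition 4, making $\Gamma$ solvable. Hence $q=2$, and since $\Gamma/K$ is insolvable the claim gives $n_0\mid|\Gamma/K|=2^r n_0/2^{j}$, so $j\leq r$ and $\Gamma/K$ has order $2^{r-j}\cdot n_0$ with $r-j<r$. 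The image $\Sigma K/K$ then has index a power of two in $\Gamma/K$, hence is insolvable by the inductive hypothesis; as $\Sigma K/K\simeq\Sigma/(\Sigma\cap K)$ and $\Sigma\cap K$ is a $2$-group, $\Sigma$ is insolvable. If instead $S$ is non-abelian, the claim gives $|S|=n_0$, and comparing odd parts in $|K|=n_0^{j}$ dividing $2^r\cdot n_0$ forces $j=1$, so $K$ is simple of order $n_0$; then $\Sigma\cap K$ has index a power of two in $K$ and is insolvable by condition 1, whence so is $\Sigma$.

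The crux is the arithmetic claim, where the bookkeeping must invoke each of conditions 2, 3, 4 and the constraints $\ep_0\leq2$ and $p_i\geq5$ in exactly the right place, always remembering that these conditions control which $(2^r\cdot n_0)/p$ (respectively which multiples of $n_0$) are solvable, so the ``divisor of a solvable number is solvable'' principle is what transfers solvability onto $T$. Once the claim is secured the remaining work is structural and routine; the one point deserving care is verifying $j\leq r$ in the abelian case, which is exactly where the insolvability of $\Gamma/K$ (via the claim) is needed to guarantee that the quotient still has order of the form $2^{r-j}\cdot n_0$ to which the inductive hypothesis applies.
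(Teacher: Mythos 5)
Your proposal is correct, but it reorganizes the argument in a way that genuinely differs from the paper's proof. The paper runs a single induction on $r$ proving (i) and (ii) simultaneously: condition 2 supplies an arbitrary non-trivial proper normal subgroup $A$ of $G$, the solvable-number arithmetic of conditions 3 and 4 shows that whichever of $A$, $G/A$ is insolvable has order $2^a\cdot n_0$ with $a\leq r-1$, and the two index identities $[A:A\cap H]=[G:H]/[G:AH]$ and $[G/A:AH/A]=[G:H]/[A:A\cap H]$ transfer the inductive hypothesis to $H$ uniformly, with no case distinction; the only arithmetic isolated in advance is the opening observation that conditions 3 and 4 force an insolvable group of order $n_0$ to be non-abelian simple. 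You instead front-load a strictly stronger arithmetic claim --- every non-abelian simple $T$ with $|T|\mid 2^r\cdot n_0$ has $|T|=n_0$ --- which subsumes that observation together with the order bookkeeping the paper redoes inside its induction, and this buys you part (ii) with no induction at all. Your induction for (i) then uses a minimal normal subgroup $K\simeq S^j$ with a socle case split: in the non-abelian case you bypass the inductive hypothesis entirely, applying condition 1 directly to $\Sigma\cap K$ inside the simple group $K$ of order $n_0$; in the abelian case your claim is exactly what certifies $j\leq r$ so that $\Gamma/K$ has order $2^{r-j}\cdot n_0$ and the hypothesis applies, as you rightly flag. The trade-off: your version yields a cleaner, self-contained arithmetic lemma and a non-inductive (ii), while the paper's version is shorter and more uniform, avoiding the socle structure and the $j=1$ argument. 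Two cosmetic points you should make explicit: in the abelian case, any odd prime $q$ dividing $|\Gamma|=2^r\cdot n_0$ automatically divides $n_0$, so ruling out odd $q\mid n_0$ does force $q=2$; and in the claim, the case $s<r_0$ can only occur when $n_0$ is even, which is precisely the proviso under which condition 3 is stated, so the appeal is legitimate.
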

\begin{proof}Notice that since a non-solvable number is a multiple of the order of a non-abelian simple group, conditions 3 and 4 imply that an insolvable group of order $n_0$ must be non-abelian simple.

\vspace{1.5mm}

We shall use induction on $r$. For $r=0$, claim (i) is simply condition 1, and claim (ii) holds by the above observation. Suppose now that $r\geq1$, and let $G$ be an insolvable group of order $2^r\cdot n_0$. By condition 2, we know that $G$ has a non-trivial and proper normal subgroup $A$. Either $A$ or $G/A$ is insolvable by Lemma~\ref{solvable lem}. Since a factor of a solvable number is solvable, we have
\[ 2^{a}\cdot n_0 = \begin{cases}|A| & \mbox{if $A$ is insolvable},\\ |G/A| &\mbox{if $G/A$ is insolvable},\end{cases}\]
where $0\leq a\leq r-1$, conditions 3 and 4. By the induction hypothesis, either $A$ or $G/A$ has a non-abelian composition factor of order $n_0$. It follows that $G$ has a non-abelian composition factor of order $n_0$ also, which proves (ii). Next, let $H$ be a subgroup of $G$ of index a power of two. Observe that $AH/A \simeq H/A\cap H$, and also that
\begin{align*}[A:A\cap H] &= [G:H]/[G:AH],\\
[G/A :AH/A] &= [G:H]/[A:A\cap H],\end{align*}
both of which are powers of two. Hence, by the induction hypothesis, we see that either $A\cap H$ or $H/A\cap H$ is insolvable. It then follows from Lemma~\ref{lem properties} that $H$ is insolvable, which proves (i).
\end{proof}

We shall apply Theorem~$\ref{general thm}$ to prove Theorem~\ref{main thm2}. To that end, we shall first show that the numbers $n_0$ in the statement of Theorem~\ref{main thm2} satisfy conditions 1, 2, 3, 4 in Theorem~$\ref{general thm}$.

\begin{lem}\label{facts lem}The following statements are true.
\begin{enumerate}[(a)]
\item A non-solvable number is divisible by at least three distinct primes.
\item A finite non-abelian simple group whose order is not divisible by three is a Suzuki group.
\end{enumerate}
\end{lem}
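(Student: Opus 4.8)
The plan is to handle the two parts by quite different means: part (a) is immediate from a classical theorem, whereas part (b) rests on the classification of finite simple groups.

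For part (a), I would invoke Burnside's $p^aq^b$ theorem, which asserts that every finite group whose order is divisible by at most two distinct primes is solvable. By definition, a non-solvable number $n$ is the order of some insolvable group $G$. If $n$ were divisible by at most two distinct primes, then $G$ would be solvable, a contradiction; hence $n$ must be divisible by at least three distinct primes. For part (b), the strategy is to appeal to the classification of finite simple groups and verify, family by family, that $3$ divides the order of every non-abelian finite simple group except the Suzuki groups. The alternating groups $A_m$ with $m\geq5$ have order divisible by $3$ since $m\geq5>3$, and a direct inspection of the orders of the $26$ sporadic groups shows that each is divisible by $3$. The substance lies in the groups of Lie type. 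Writing $q=p^f$ for the defining prime power, if $p=3$ then the order contains a positive power of $q$ and is thus divisible by $3$; so one may assume $3\nmid q$, whence $q\equiv\pm1\pmod3$ and $q^2-1\equiv0\pmod3$.

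The key observation is then that the order of essentially every Lie type family — the linear, unitary, symplectic and orthogonal types, the exceptional types $G_2,F_4,E_6,E_7,E_8$, and the twisted types $^2A_n$, $^2D_n$, $^3D_4$, $^2E_6$ — contains a factor of the form $q^2-1$ (for instance as $q^2-(-1)^2$ in the unitary case), forcing divisibility by $3$ once $3\nmid q$. The sole exception is the Suzuki family $^2B_2(q)=\Sz(q)$ with $q=2^{2m+1}$, whose order $q^2(q^2+1)(q-1)$ notably lacks the factor $q+1$; since $q\equiv-1\pmod3$ one computes $q^2(q^2+1)(q-1)\equiv1\cdot2\cdot1\equiv2\pmod3$, so $3$ does not divide $|\Sz(q)|$. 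This exhibits the Suzuki groups as a family escaping divisibility by $3$, and the preceding checks show it is the only one.

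The main obstacle is the bookkeeping: one must confirm that the factor $q^2-1$ (or its characteristic-$3$ analogue) genuinely appears in the order formula for every family, paying particular attention to the remaining twisted groups where the cyclotomic factors are altered by the twisting automorphism. Here a little care is needed — for the Ree groups $^2G_2(q)$ the group is defined in characteristic $3$, so divisibility by $3$ is immediate, whereas for $^2F_4(q)$ with $q=2^{2m+1}\equiv-1\pmod3$ one uses the factor $q^4-1\equiv0\pmod3$ in place of $q^2-1$. Once these verifications are assembled the Suzuki groups emerge as the unique family, and since the resulting characterization is standard I would cite the appropriate classification reference to keep the write-up concise rather than reproduce every order formula in full.
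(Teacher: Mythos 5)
Your proposal is correct and follows essentially the same route as the paper, which disposes of part (a) by citing Burnside's $p^aq^b$ theorem and of part (b) by citing the classification of finite simple groups. The only difference is that you carry out the family-by-family divisibility check (correctly, including the edge cases ${}^2G_2$ in characteristic $3$ and ${}^2F_4$ via $q^4-1$) that the paper leaves implicit in its citation.
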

\begin{proof}Part (a) is Burnside's theorem. Part (b) follows from the classification of finite simple groups.
\end{proof}

\begin{lem}\label{lem1}Let $n_0 = 2^{r_0}\cdot 3^{\ep_0}\cdot p$, where $r_0\in\bN$, $\ep_0\in\{1,2\}$, and $p\geq5$ is a prime. If there exists a non-abelian simple group $\Gamma$ of order $n_0$, then
\begin{equation}\label{n0 1}n_0 \in \{2^2\cdot 3\cdot 5,\,2^3\cdot3\cdot 7,\,2^3\cdot 3^2\cdot 7,\,2^4\cdot3^2\cdot 17,\,2^3\cdot 3^2\cdot 5 \}\end{equation}
and
\[ \Gamma\simeq \begin{cases}A_5&\mbox{for }n_0 = 2^2\cdot 3\cdot 5,\\
\PSL_2(7) &\mbox{for }n_0 = 2^3\cdot 3\cdot 7,\\
\PSL_2(8) &\mbox{for }n_0 = 2^3\cdot 3^2\cdot 7, \\
\PSL_2(17) &\mbox{for }n_0 = 2^4\cdot 3^2\cdot 17,\\
A_6 &\mbox{for }n_0 = 2^3\cdot 3^2\cdot 5.\end{cases}\]
In particular, condition $2$ in Theorem~$\ref{general thm}$ is satisfied for $n_0$ in (\ref{n0 1}).
\end{lem}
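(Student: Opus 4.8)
The plan is to invoke the classification of the finite non-abelian simple groups whose order is divisible by exactly three distinct primes, and then to filter that list using the arithmetic shape of $n_0$. Since $n_0 = 2^{r_0}\cdot 3^{\ep_0}\cdot p$ with $\ep_0\in\{1,2\}$ and $p\geq5$, the integer $n_0$ is divisible by exactly the three primes $2$, $3$, and $p$. As $\Gamma$ is non-abelian simple of order $n_0$, it is therefore a simple group whose order has precisely three prime divisors. By the known classification of such groups (due to Herzog, and complete via the classification of finite simple groups), there are exactly eight of them, namely $A_5$, $\PSL_2(7)$, $\PSL_2(8)$, $\PSL_2(17)$, $A_6$, $\PSL_3(3)$, $\mathrm{PSU}_3(3)$, and $\mathrm{PSU}_4(2)\simeq\mathrm{PSp}_4(3)$, of orders $2^2\cdot3\cdot5$, $2^3\cdot3\cdot7$, $2^3\cdot3^2\cdot7$, $2^4\cdot3^2\cdot17$, $2^3\cdot3^2\cdot5$, $2^4\cdot3^3\cdot13$, $2^5\cdot3^3\cdot7$, and $2^6\cdot3^4\cdot5$, respectively.

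The next step is to discard the groups incompatible with the hypotheses. The constraint $\ep_0\in\{1,2\}$ forces the exponent of $3$ in $n_0$ to be at most $2$, which immediately rules out $\PSL_3(3)$, $\mathrm{PSU}_3(3)$, and $\mathrm{PSU}_4(2)$, whose orders carry $3^3$, $3^3$, and $3^4$ respectively. The remaining five groups each have $3$-part at most $3^2$ and largest prime factor occurring to the first power, so each is of the required form $2^{r_0}\cdot3^{\ep_0}\cdot p$; reading off their orders gives precisely the list (\ref{n0 1}) together with the stated correspondence between $\Gamma$ and $n_0$. Uniqueness of the isomorphism type of $\Gamma$ for each such $n_0$ is part of the same classification, since no two of the eight groups share an order.

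To obtain the ``in particular'' assertion about condition $2$ of Theorem~\ref{general thm}, I would observe that the eight three-prime simple groups above have pairwise distinct odd parts $3^{a}\cdot q$. Hence, for each $n_0$ in (\ref{n0 1}), the group $\Gamma$ is the only non-abelian simple group whose order has odd part $3^{\ep_0}\cdot p$. Since $2^r\cdot n_0$ with $r\in\bN$ has the same odd part as $n_0$ but a strictly larger power of $2$ than $|\Gamma| = n_0$, it cannot be the order of a non-abelian simple group; this is exactly condition $2$.

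The only genuinely deep ingredient is the classification of simple groups of order divisible by three primes, which rests on the classification of finite simple groups and is used here as a black box; everything after that is bookkeeping with prime exponents. I therefore expect no real obstacle beyond recording the eight orders correctly and verifying that their odd parts are pairwise distinct, which is what delivers both the uniqueness of $\Gamma$ and condition $2$.
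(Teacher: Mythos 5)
Your proof is correct, but it takes a genuinely different route from the paper's. You invoke, as a black box, the full classification of simple $K_3$-groups (simple groups whose order has exactly three prime divisors), which rests on the classification of finite simple groups, and then filter the eight groups $A_5$, $A_6$, $\PSL_2(7)$, $\PSL_2(8)$, $\PSL_2(17)$, $\PSL_3(3)$, $\mathrm{PSU}_3(3)$, $\mathrm{PSU}_4(2)$ by the constraint $\ep_0\leq2$ on the $3$-part; your list and the recorded orders are accurate, and the filtering is sound. The paper instead stays closer to the hypotheses and avoids CFSG entirely for this lemma: since $p$ exactly divides $n_0$, a Sylow $p$-subgroup of any group of order $n_0$ is cyclic, and the case $p>3^{\ep_0}$ is then covered by Herzog's Theorem~1 \cite{Herzog}, a pre-CFSG character-theoretic result, while the leftover cases $\ep_0=2$ with $p=5$ and $p=7$ are settled by Brauer \cite{Brauer} and Wales \cite{Wales}, who classified the simple groups of order $5\cdot3^a\cdot2^b$ and $7\cdot3^a\cdot2^b$, respectively. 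What each approach buys: yours is more uniform (one classification theorem, then arithmetic bookkeeping) but imports the full strength of CFSG, whereas the paper's needs only classical results at the cost of a three-way case split keyed to whether $p>3^{\ep_0}$. For the ``in particular'' clause, your observation that the eight groups have pairwise distinct odd parts is a clean way to see that no order of the form $2^r\cdot n_0$ with $r\in\bN$ can be the order of a non-abelian simple group; the paper leaves this step implicit, and it can equally be read off from the lemma itself, since $2^r\cdot n_0$ is again of the shape $2^{r_0'}\cdot3^{\ep_0}\cdot p$ covered by the statement and none of the five listed orders is a nontrivial power of two times another.
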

\begin{proof}Since $p$ exactly divides $n_0$, a Sylow $p$-subgroup of any group of order $n_0$ is cyclic. If $p>3^{\ep_0}$, then the claim follows from \cite[Theorem 1]{Herzog}. If not, then $\ep_0=2$ with $p = 5,7$, and the claim follows from \cite{Brauer} and \cite{Wales}, respectively. 
\end{proof}

\begin{lem}\label{lem1'}Let $n_0 = 2^2\cdot 3\cdot 5$ or $2^4\cdot 3^2\cdot 17$. Then, up to isomorphism $A_5$ or $\PSL_2(17)$, respectively, is the only insolvable group of order $n_0$. Moreover, conditions $1, 3, 4$ in Theorem~$\ref{general thm}$ are satisfied.
\end{lem}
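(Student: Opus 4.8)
The plan is to first reduce conditions $3$ and $4$ to the solvability of a short list of explicit integers, verify those, then deduce the uniqueness assertion as a formal consequence, and finally settle condition $1$ from the subgroup structure of $A_5$ and $\PSL_2(17)$.

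First I would record that conditions $3$ and $4$ of Theorem~\ref{general thm} merely assert that certain numbers are solvable. For $n_0 = 2^2\cdot3\cdot5$ these are $n_0/2 = 30$ together with the families $(2^r\cdot n_0)/3 = 2^{r+2}\cdot5$ and $(2^r\cdot n_0)/5 = 2^{r+2}\cdot3$. The two families involve only two primes each, hence are solvable by Lemma~\ref{facts lem}~(a), while $30<60$ is solvable because the smallest non-abelian simple group has order $60$. For $n_0 = 2^4\cdot3^2\cdot17$ the numbers are $n_0/2 = 1224 = 2^3\cdot3^2\cdot17$ and the families $(2^r\cdot n_0)/3 = 2^{r+4}\cdot3\cdot17$ and $(2^r\cdot n_0)/17 = 2^{r+4}\cdot3^2$, the last of which is again solvable by Lemma~\ref{facts lem}~(a).

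The crux is the remaining numbers, all of the form $2^a\cdot3^b\cdot17$ with $b\le2$. Here I would argue that no non-abelian simple group has order dividing such a number. Any such group would have order $2^c\cdot3^d\cdot17^e$; if $e=0$ it has at most two prime divisors and is excluded by Lemma~\ref{facts lem}~(a), while if $e=1$ then Lemma~\ref{lem1} (applied with $p=17$) forces it to be $\PSL_2(17)$, of order $2^4\cdot3^2\cdot17$. Since $2^4\nmid2^3$ this does not divide $1224$, and since $3^2\nmid3$ it does not divide $2^a\cdot3\cdot17$. Hence all the numbers in question are solvable, which establishes conditions $3$ and $4$. Granting this, the uniqueness statement is immediate: for every prime $q\mid n_0$ the number $n_0/q$ is solvable (condition $3$ for $q=2$, condition $4$ with $r=0$ for $q$ odd), so every proper divisor of $n_0$ is solvable, being a factor of some $n_0/q$. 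Thus if $G$ is insolvable of order $n_0$ and had a proper non-trivial normal subgroup $A$, then by Lemma~\ref{solvable lem} one of $A,G/A$ would be insolvable, yet its order is a proper divisor of $n_0$ and hence solvable, a contradiction; so $G$ is simple, hence non-abelian simple, and Lemma~\ref{lem1} identifies it as $A_5$ or $\PSL_2(17)$ respectively.

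Finally I would verify condition $1$ using that uniqueness reduces it to a statement about the subgroups of $A_5$, respectively $\PSL_2(17)$, of index a power of two. Since the $2$-parts of $n_0$ are $4$ and $16$, it suffices to show these groups have no proper subgroup of index $\le4$, respectively $\le16$. The smallest index of a proper subgroup of $A_5$ is $5$, and by Dickson's classification of the subgroups of $\PSL_2(p)$ the smallest index of a proper subgroup of $\PSL_2(17)$ is $18$, realised by the point stabiliser on the projective line. As $5>4$ and $18>16$, the only subgroup of index a power of two is the whole group, which is insolvable, so condition $1$ holds. The main obstacle is the family $2^a\cdot3\cdot17$ with unbounded $2$-part in condition $4$: because the $2$-part is not bounded one cannot simply enumerate small simple groups, and it is precisely here that the Herzog-type classification underlying Lemma~\ref{lem1} is essential.
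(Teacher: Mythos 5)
Your proposal is correct and takes essentially the same route as the paper's (very terse) proof: conditions 3 and 4, and from them the uniqueness claim, are deduced from Burnside's theorem (Lemma~\ref{facts lem}~(a)) together with the classification in Lemma~\ref{lem1}, and condition 1 is settled because $A_5$ and $\PSL_2(17)$ have no proper subgroup of index a power of two; you merely spell out the details the paper leaves as ``easy to deduce.'' The only micro-gap is that in your $e=1$ case you should first rule out $d=0$ (and $c=0$) before applying Lemma~\ref{lem1}, which Burnside's theorem --- already invoked by you --- does immediately, since a non-abelian simple group's order has at least three prime divisors.
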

\begin{proof}Since a non-solvable number is a multiple of the order of a non-abelian simple group, from Lemmas~\ref{facts lem} (a) and~\ref{lem1}, it is easy to deduce the first claim and that conditions 3 and 4 hold. Condition 1 holds trivially because $A_5$ and $\PSL_2(17)$ have no proper subgroup of index a power of two.
\end{proof}

Note that $n_0 = 2^3\cdot 3\cdot 7$ fails condition 1  while $n_0=2^3\cdot 3^2\cdot 7$ and $2^3\cdot 3^2\cdot 5$ fail condition 4 in Theorem~$\ref{general thm}$.

\begin{lem}\label{lem2}Let $n_0 = 2^{r_0}(4^{2m_0+1}+1)(2^{2m_0+1}-1)$, where $r_0,m_0\in\bN$. If there exists a non-abelian simple group $\Gamma$ of order $n_0$, then 
\[r_0 = 2(2m_0+1)\mbox{ with }\Gamma\simeq \Sz(2^{2m_0+1}).\]
In particular, condition $2$ in Theorem~$\ref{general thm}$ is satisfied for $r_0=2(2m_0+1)$.
\end{lem}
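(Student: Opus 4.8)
The plan is to reduce the statement to Lemma~\ref{facts lem}(b)---the consequence of the classification of finite simple groups that a non-abelian simple group of order prime to $3$ must be a Suzuki group---and then to recover the parameters $r_0$ and $m_0$ by inspecting the prime factorization of $n_0$.

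First I would verify that $3\nmid n_0$. Since $2\equiv-1$ (mod $3$), one has $4^{2m_0+1}+1\equiv 1+1\equiv 2$ (mod $3$) and $2^{2m_0+1}-1\equiv(-1)^{2m_0+1}-1\equiv 1$ (mod $3$), while $2^{r_0}\equiv(-1)^{r_0}\not\equiv 0$ (mod $3$); hence none of the three factors of $n_0$ is divisible by $3$. Consequently, if $\Gamma$ is a non-abelian simple group of order $n_0$, then Lemma~\ref{facts lem}(b) forces $\Gamma\simeq\Sz(2^{2m+1})$ for some integer $m\geq1$, whose order by (\ref{Sz order}) is $|\Sz(2^{2m+1})|=2^{2(2m+1)}(4^{2m+1}+1)(2^{2m+1}-1)$.

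Next I would compare the two factorizations of $n_0$. Because $4^{k}+1$ and $2^{k}-1$ are both odd, the odd part of $n_0$ equals $(4^{2m_0+1}+1)(2^{2m_0+1}-1)$, and likewise the odd part of $|\Sz(2^{2m+1})|$ equals $(4^{2m+1}+1)(2^{2m+1}-1)$; equating them gives $(4^{2m+1}+1)(2^{2m+1}-1)=(4^{2m_0+1}+1)(2^{2m_0+1}-1)$. The function $g(k)=(4^{k}+1)(2^{k}-1)$ is a product of two strictly increasing positive functions on $k\geq1$, hence strictly increasing, so this equality forces $2m+1=2m_0+1$, that is $m=m_0$. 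Comparing the $2$-parts then yields $2^{r_0}=2^{2(2m+1)}$, whence $r_0=2(2m_0+1)$ and $\Gamma\simeq\Sz(2^{2m_0+1})$, as claimed. For the final assertion, a non-abelian simple group of order $2^r\cdot n_0$ would have odd part $(4^{2m_0+1}+1)(2^{2m_0+1}-1)$ and $2$-part $2^{\,r+2(2m_0+1)}$, so the same argument would force $r+2(2m_0+1)=2(2m_0+1)$, which is impossible for $r\in\bN$; thus condition~$2$ in Theorem~\ref{general thm} holds.

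I do not expect a genuine obstacle: the hard content is absorbed entirely into Lemma~\ref{facts lem}(b), which rests on the classification of finite simple groups. The only points that need care are the clean separation of the $2$-part from the odd part of $n_0$ and the elementary monotonicity of $g$.
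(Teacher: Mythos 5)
Your proposal is correct and takes essentially the same route as the paper, whose entire proof reads ``This is clear from Lemma~\ref{facts lem} (b) and (\ref{Sz order})''; you have simply made explicit the details that the paper leaves implicit, namely the check that $3\nmid n_0$, the separation of the $2$-part from the odd part, and the strict monotonicity of $k\mapsto(4^k+1)(2^k-1)$ forcing $m=m_0$ and $r_0=2(2m_0+1)$. The deduction of condition~2 by applying the first part with exponent $r+2(2m_0+1)$ is likewise exactly what the paper intends, so there is nothing to correct.
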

\begin{proof}This is clear from Lemma~\ref{facts lem} (b) and (\ref{Sz order}).
\end{proof}

\begin{lem}\label{lem2'}Let $n_0 = 4^{\ell_0}(4^{\ell_0}+1)(2^{\ell_0}-1)$, where $\ell_0$ is an odd prime. Then, up to isomorphism  $\Sz(2^{\ell_0})$ is the only insolvable group of order $n_0$. Moreover, conditions $1, 3, 4$ in Theorem~$\ref{general thm}$ are satisfied.
\end{lem}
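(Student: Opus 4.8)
The plan is to follow the template of Lemma~\ref{lem1'}, with Lemma~\ref{lem2} playing the role of Lemma~\ref{lem1} and the Suzuki dichotomy of Lemma~\ref{facts lem}(b) playing the role of Burnside's theorem. The starting observation is that $3\nmid n_0$: reducing modulo $3$ one has $4^{\ell_0}\equiv1$, so $4^{\ell_0}+1\equiv2$, while $2^{\ell_0}\equiv-1$ (as $\ell_0$ is odd), so $2^{\ell_0}-1\equiv1$, and $4^{\ell_0}$ is a power of $2$. Hence the order of any non-abelian simple group dividing $n_0$ is coprime to $3$, so by Lemma~\ref{facts lem}(b) such a group must be a Suzuki group $\Sz(2^s)$ with $s\geq3$ odd. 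The heart of the argument is the number-theoretic claim that the only such group is $\Sz(2^{\ell_0})$, equivalently that $|\Sz(2^s)|=4^s(4^s+1)(2^s-1)$ divides $n_0$ only when $s=\ell_0$.

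I would isolate and prove this claim first, as it is the main obstacle. It suffices to show that for odd $s\geq3$ with $s\neq\ell_0$ the factor $2^s-1$ already fails to divide the odd part $(4^{\ell_0}+1)(2^{\ell_0}-1)$ of $n_0$, and this is where the primality of $\ell_0$ enters. If $\ell_0\nmid s$, then $\gcd(s,\ell_0)=1$, so $\gcd(2^s-1,2^{\ell_0}-1)=2^{\gcd(s,\ell_0)}-1=1$, forcing $2^s-1\mid 4^{\ell_0}+1=2^{2\ell_0}+1$; reducing modulo $2^s-1$ and using $2^s\equiv1$ turns this into $2^s-1\mid 2^{t}+1$ for some $0\leq t<s$, which is impossible for $s\geq3$ since then $0<2^t+1<2^s-1$. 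If instead $\ell_0\mid s$ with $s\neq\ell_0$, then $s\geq3\ell_0$ (as $s$ is odd), so $2^s-1\geq2^{3\ell_0}-1>(2^{2\ell_0}+1)(2^{\ell_0}-1)$ and divisibility fails on size grounds. This establishes the claim, and in particular $|\Sz(2^{\ell_0})|=4^{\ell_0}(4^{\ell_0}+1)(2^{\ell_0}-1)=n_0$.

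With the claim in hand, the remaining deductions are routine. For the first assertion, any insolvable group $G$ of order $n_0$ has a non-abelian composition factor $T$; since $|T|$ divides $n_0$, the claim gives $T\simeq\Sz(2^{\ell_0})$ and $|T|=n_0=|G|$, so $G=T\simeq\Sz(2^{\ell_0})$ (one may also quote Lemma~\ref{lem2} for the final identification of the simple group). For conditions $3$ and $4$, I would use that a non-solvable number is a multiple of the order of a non-abelian simple group: if a non-abelian simple (hence Suzuki) group had order dividing $n_0/2$ or $(2^r\cdot n_0)/p$, its order would divide $n_0$ too, so by the claim it would equal $n_0$; but $n_0$ divides neither $n_0/2$ nor $(2^r\cdot n_0)/p$ (the latter would force $p\mid2^r$). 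Hence $n_0/2$ and all $(2^r\cdot n_0)/p$ are solvable.

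Finally, condition $1$ holds for the same trivial reason as in Lemma~\ref{lem1'}: the unique insolvable group $\Sz(2^{\ell_0})$ has no proper subgroup of index a power of two. Indeed, the smallest index of a proper subgroup of $\Sz(q)$ equals $q^2+1$ (the ovoid action; see \cite{Sz}), so every proper subgroup of $\Sz(2^{\ell_0})$ has index at least $4^{\ell_0}+1$, which is odd and strictly larger than the $2$-part $4^{\ell_0}$ of $|\Sz(2^{\ell_0})|$; thus no proper subgroup can have $2$-power index. Assembling these pieces yields the lemma.
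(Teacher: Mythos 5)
Your proposal is correct and follows essentially the same route as the paper's proof: the same appeal to Lemma~\ref{facts lem}(b) and the order formula (\ref{Sz order}) reduces everything to an arithmetic claim, and your reduction of $2^{2\ell_0}+1$ modulo $2^s-1$ is exactly the paper's inductive identity $(2^{k}-1)+(2^{2\ell_0-tk}+1)=2^{k}(2^{2\ell_0-(t+1)k}+1)$. Three remarks on the differences. First, your case split is in one respect more careful than the paper: the paper's sentence that $\gcd(2^{k}-1,2^{\ell_0}-1)=1$ ``because $\ell_0$ is prime'' silently excludes the case $\ell_0\mid k$ (which is ruled out there by comparing $2$-parts, since $4^{k}$ must divide $4^{\ell_0}$), whereas you dispose of $\ell_0\mid s$ explicitly via the size estimate $2^{3\ell_0}-1>(4^{\ell_0}+1)(2^{\ell_0}-1)$; you also make the hypothesis $3\nmid n_0$ of Lemma~\ref{facts lem}(b) explicit, which the paper leaves implicit. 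Second, in your verification of condition 4 the sentence ``its order would divide $n_0$ too'' is not literally immediate: $|T|$ divides $(2^r\cdot n_0)/p$, whose $2$-part exceeds that of $n_0$, so a priori the $2$-part $4^{s}$ of $|T|$ need not divide $4^{\ell_0}$. Fortunately you stated and proved your claim in the stronger odd-part form---$2^{s}-1\nmid(4^{\ell_0}+1)(2^{\ell_0}-1)$ for odd $s\geq3$ with $s\neq\ell_0$---and since the odd part of $(2^r\cdot n_0)/p$ divides $(4^{\ell_0}+1)(2^{\ell_0}-1)$, the claim still forces $s=\ell_0$, hence $|T|=n_0$ and $p\mid 2^r$, a contradiction; so the argument is sound, but you should rephrase that sentence to argue via the odd part (the paper instead reruns the same gcd computation on the explicit equations for $n_0/2$ and $(2^r\cdot n_0)/p$). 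Third, for condition 1 the paper cites the maximal subgroups of $\Sz(2^{\ell_0})$ from \cite[Theorem 4.1]{Wilson}; your alternative via the minimal permutation degree $q^2+1$ being odd and larger than the $2$-part $q^2$ is clean and correct, but the minimality of that degree is itself derived from the subgroup classification, so \cite{Sz} (the 1960 announcement, which only exhibits the degree-$(q^2+1)$ action, not its minimality) is not an adequate citation---cite Wilson or Suzuki's detailed determination of the subgroups instead.
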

\begin{proof}Suppose for contradiction that there is an insolvable group of order $n_0$ which is not isomorphic to $\Sz(2^{\ell_0})$, and thus cannot be non-abelian simple by Lemma~\ref{lem2}. Since a non-solvable number is the multiple of the order of a non-abelian simple group, from Lemma~\ref{facts lem} (b) and (\ref{Sz order}), we deduce that
\[ 4^{\ell_0}(4^{\ell_0}+1)(2^{\ell_0}-1) = n_0 = d\cdot  4^{k}(4^{k}+1)(2^{k}-1),\]
where $d,k\in\bN$ with $k\geq3$ odd and $d\geq2$. Plainly $\ell_0\neq k$, and because $\ell_0$ is prime, we deduce that
\[ \gcd(2^{k}-1,2^{\ell_0}-1) = 2^{\gcd(k,\ell_0)} - 1 = 2 - 1 =1.\]
This means that $2^{k}-1$ divides $4^{\ell_0}+1$. Note that then $k\leq 2\ell_0$. But 
\[ (2^{k}-1) + (2^{2\ell_0 - tk}+1) = 2^{k}(2^{2\ell_0 - (t+1)k} + 1)\mbox{ for all $t\in\bN_{\geq0}$}.\]
By induction, this implies that $2^{k}-1$ divides $2^s+1$ for some $0\leq s\leq k-1$, which is impossible because $k\geq3$. This proves the first claim. 

\vspace{1.5mm}

Now, the maximal subgroups of $\Sz(2^{\ell_0})$ are known; see \cite[Theorem 4.1]{Wilson}, for example. None has index a non-trivial power of two and so condition 1 is satisfied. To prove conditions 3 and 4, note that if $n_0/2$ were non-solvable, then it would be a multiple of the order of a non-abelian simple group, so by Lemma~\ref{facts lem} (b) and (\ref{Sz order}), we have
\[ 4^{\ell_0}(4^{\ell_0}+1)(2^{\ell_0}-1) = 2\cdot d\cdot  4^{k}(4^{k}+1)(2^{k}-1),\]
where $d,k\in\bN$ with $k\geq3$ odd. Similarly, if $(2^r\cdot n_0)/p$ were non-solvable for some odd prime $p$ divisor of $n_0$ and $r\in\bN_{\geq0}$, then we have
\[ 2^r\cdot 4^{\ell_0}(4^{\ell_0}+1)(2^{\ell_0}-1) = p\cdot d\cdot  4^{k}(4^{k}+1)(2^{k}-1),\]
where $d,k\in\bN$ with $k\geq3$ odd. In both cases, using the same argument as above, we obtain a contradiction. This completes the proof.
\end{proof}

 
\subsection{Proof of Theorem~\ref{main thm2}} Let $n_0$ be as in the statement of the theorem. By Lemmas~\ref{lem1},~\ref{lem1'},~\ref{lem2}, and~\ref{lem2'}, conditions 1, 2, 3, 4 in Theorem~$\ref{general thm}$ are satisfied. Also, up to isomorphism there is only one insolvable group of order $n_0$  and it is non-abelian simple. It then follows from Proposition~\ref{prop2} (a) that Conjecture~\ref{conjecture} holds when $n=n_0$. We now deduce directly from Theorem~$\ref{general thm}$ that Conjecture~\ref{conjecture} also holds when $n = 2^r\cdot n_0$ for any $r\in\bN$.

\section{Algorithm to test the conjecture}\label{sec algorithm}

In this section, we shall describe an algorithm which may be used to prove Conjecture~\ref{conjecture} for a given $n$, as long as all finite groups of order $n$ are known. Then, we shall apply our algorithm to prove Theorem~\ref{main thm3}.

\vspace{1.5mm}

Recall that given any finite group $\Gamma$, the \emph{Fitting subgroup} of $\Gamma$, denoted by $\Fit(\Gamma)$, is the unique largest normal nilpotent subgroup of $\Gamma$. Plainly $\Fit(\Gamma)$ is a characteristic subgroup of $\Gamma$.

\begin{prop}\label{prop GN}Let $G$ and $N$ be two finite groups of the same order such that the set $\E'(G,N)$ is non-empty. Define
\begin{align*}\mathcal{M}(N) &=  \{|M| : M\mbox{ is a characteristic subgroup of }N\},\\
\mathcal{H}(G) &= \{|H| : H\mbox{ is a subgroup of }G\}.
\end{align*}
Then, we have $\mathcal{M}(N)\subset\mathcal{H}(G)$. Also, there is a solvable subgroup of $G$ whose order is that of $\Fit(N)$.
\end{prop}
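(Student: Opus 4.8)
The plan is to prove Proposition~\ref{prop GN} by applying the machinery of Proposition~\ref{test2} to suitable characteristic subgroups of $N$, extracting the two conclusions separately.

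\textbf{The containment $\mathcal{M}(N)\subset\mathcal{H}(G)$.} First I would take an arbitrary characteristic subgroup $M$ of $N$ and set $H=\ff^{-1}$... more precisely $H=\fg^{-1}(M)$ as in Proposition~\ref{test2}. That proposition already tells us $H$ is a subgroup of $G$ and that $\E'(H,M)$ is non-empty. Since a non-empty set $\E'(H,M)$ forces $|H|=|M|$ (every regular subgroup of $\Hol(M)$ has order $|M|$), we immediately get $|M|\in\mathcal{H}(G)$. As $M$ ranges over all characteristic subgroups of $N$, this yields $\mathcal{M}(N)\subset\mathcal{H}(G)$, which disposes of the first assertion with essentially no new work beyond invoking Proposition~\ref{test2}.

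\textbf{The solvable subgroup of order $|\Fit(N)|$.} For the second claim I would apply the same construction to the specific characteristic subgroup $M=\Fit(N)$, which is characteristic in $N$ as noted just before the statement. Set $H=\fg^{-1}(\Fit(N))$. By the first part, $H$ is a subgroup of $G$ with $|H|=|\Fit(N)|$, so the order is correct; the remaining task is to show $H$ is \emph{solvable}. Here the key is that $\Fit(N)$ is nilpotent by definition, hence in particular solvable, and that $\E'(H,\Fit(N))$ is non-empty. The natural tool is Proposition~\ref{prop1}(b): that result says that whenever $\E'(G',N')$ is non-empty and $G'$ is abelian then $N'$ is solvable, but what I actually want is the converse direction. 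The cleaner approach is to apply Proposition~\ref{prop1}(a), which states that if $N'$ is nilpotent and $\E'(G',N')$ is non-empty, then $G'$ is solvable. Taking $N'=\Fit(N)$ (nilpotent) and $G'=H$, the non-emptiness of $\E'(H,\Fit(N))$ established above gives exactly that $H$ is solvable.

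\textbf{Main obstacle.} The step I expect to require the most care is making sure the hypothesis of Proposition~\ref{prop1}(a) is matched correctly: that proposition constrains the \emph{abstract isomorphism type} $G$ via nilpotency of $N$, and I must check that the roles of ``$G$'' and ``$N$'' in $\E'(H,\Fit(N))$ are assigned so that the nilpotent group $\Fit(N)$ plays the role of the group $N$ whose holomorph is taken. By construction $\E'(H,\Fit(N))$ consists of regular subgroups of $\Hol(\Fit(N))$ isomorphic to $H$, so $\Fit(N)$ is indeed in the ``$N$'' slot and $H$ in the ``$G$'' slot; since $\Fit(N)$ is nilpotent, Proposition~\ref{prop1}(a) applies verbatim and forces $H$ solvable. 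Everything else is a routine bookkeeping consequence of Proposition~\ref{test2} and the order count for regular subgroups, so no genuinely hard estimate or new construction is needed.
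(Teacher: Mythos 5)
Your proposal is correct and follows essentially the same route as the paper, whose proof is the one-line observation that the statement follows directly from Propositions~\ref{test2} and~\ref{prop1}~(a): the containment $\mathcal{M}(N)\subset\mathcal{H}(G)$ comes from applying Proposition~\ref{test2} to each characteristic subgroup $M$ (with $|H|=|M|$ since $\fg$ is bijective, equivalently since a regular subgroup of $\Hol(M)$ has order $|M|$), and solvability of $\fg^{-1}(\Fit(N))$ comes from Proposition~\ref{prop1}~(a) applied with the nilpotent group $\Fit(N)$ in the ``$N$'' slot, exactly as you checked.
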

\begin{proof}This follows directly from Propositions~\ref{prop1} (a) and~\ref{test2}.
\end{proof}

While Proposition~\ref{prop GN} gives us a way to test whether a pair $(G,N)$ satisfies condition (\ref{GN conditions}), applying it directly to prove Conjecture~\ref{conjecture} has two issues:
\begin{itemize}
\item Often there are many groups of a given order $n$, and it is inefficient to test whether (\ref{GN conditions}) holds for each pair $(G,N)$ of groups of order $n$. 
\item It is time-consuming to compute characteristic subgroups.
\end{itemize}
To overcome these difficulties, our idea is to let $G$ vary, and check that 
\begin{equation}\label{G conditions}
\E'(G,N)\neq\emptyset\mbox{ for some insolvable group $G$ of order $|N|$}
\end{equation}
cannot hold for each fixed $N$ separately. Also, we shall apply the test involving the Fitting subgroup first because it is the least time-consuming.

\vspace{1.5mm}

For $n\in\bN$, define the following sets:
\begin{align*}
\mathcal{L}_1(n) & = \bigcup_{\tiny\substack{|G|=n\\G\mbox{ is insolvable}}}\{|H|:H\mbox{ is a solvable subgroup of $G$}\},\\
\mathcal{L}_2(n) & =  \bigcup_{\tiny\substack{|G|=n\\G\mbox{ is insolvable}}}\{|H|:H\mbox{ is a subgroup of $G$}\}.
\end{align*}
Write $\mathcal{N}_0(n)$ for the set of all solvable groups of order $n$. For all $N\in\mathcal{N}_0(n)$:
\begin{itemize}
\item If $|\Fit(N)|\notin\mathcal{L}_1(n)$, then (\ref{G conditions}) does not hold by Proposition~\ref{prop GN}.
\item If $\Aut(N)$ is solvable, then $\Hol(N)$ is solvable by Lemma~\ref{solvable lem} and so it has no insolvable subgroup by Lemma~\ref{lem properties}, whence (\ref{G conditions}) does not hold.
\item If $n/2\in\mathcal{M}(N)$ and Conjecture~\ref{conjecture} holds for $n/2$, then (\ref{G conditions})  does not hold by Proposition~\ref{test2}, because any subgroup of index two (when it exists) of an insolvable group must be insolvable by Lemma~\ref{solvable lem}.
\item If $\mathcal{M}(N)\not\subset \mathcal{L}_2(n)$, then (\ref{G conditions}) does not hold by Proposition~\ref{prop GN}.
\item If the greatest common divisor of $n$ and $|\Out(N)|$ is solvable, then (\ref{G conditions}) does not hold by Proposition~\ref{test1}.
\end{itemize}
Our algorithm uses thee above criteria, and removes the groups $N\in\mathcal{N}_0(n)$ for which (\ref{G conditions}) fails to hold; if the set becomes empty, then Conjecture~\ref{conjecture} holds for $n$. More specifically, define the following sets:
\begin{align*}
\mathcal{N}_1(n) & = \{N\in\mathcal{N}_0(n): |\Fit(N)|\in\mathcal{L}_1(n)\},\\
\mathcal{N}_2(n) & = \{N\in\mathcal{N}_1(n): \Aut(N)\mbox{ is insolvable}\},\\
\mathcal{N}_{31}(n) & = \{N\in\mathcal{N}_2(n): n/2\not\in\mathcal{M}(N)\},\\
\mathcal{N}_{32}(n) & = \{N\in\mathcal{N}_2(n) : \mathcal{M}(N)\subset\mathcal{L}_2(n)\},\\
\mathcal{N}_{33}(n) & = \{N\in\mathcal{N}_2(n): \gcd(n,|\Out(N)|)\mbox{ is non-solvable}\}.
\end{align*}
If $\mathcal{N}_{32}(n)\cap\mathcal{N}_{33}(n)=\emptyset$, then Conjecture~\ref{conjecture} holds for $n$. Similarly, if Conjecture~\ref{conjecture} holds for $n/2$ and $\mathcal{N}_{31}(n)\cap\mathcal{N}_{32}(n)\cap\mathcal{N}_{33}(n)=\emptyset$, then Conjecture~\ref{conjecture} holds for $n/2$.

\vspace{1.5mm}

We have implemented the computations of the above sets, except $\mathcal{N}_{33}(n)$, in {\sc Magma} \cite{magma} and {\sf GAP} \cite{gap}. The code may be found in the appendix.


\subsection{Proof of Theorem~\ref{main thm3}}


The groups of order $n\leq 2000$ are available in the {\sc SmallGroups} Library \cite{SGlibrary}. Using this library, we ran our algorithm in {\sc Magma} to the non-solvable numbers $n\leq 2000$.

First, we computed that $\mathcal{N}_2(n)$ is empty except for
\[n = 480,600,960,1008,1200,1320,1344,1440,1512,1680,1800,1920.\]
Among these numbers, we further computed that $\mathcal{N}_{31}(n)\cap\mathcal{N}_{32}(n)$ is empty except for $n=1008,1512$. In fact, we have
\begin{align*}
\mathcal{N}_2(1008) & = \mathcal{N}_{31}(1008)\cap\mathcal{N}_{32}(1008) = \{\mbox{{\sc SmallGroup}}(1008,910)\},\\
\mathcal{N}_2(1512) & = \mathcal{N}_{31}(1512)\cap\mathcal{N}_{32}(1512) = \{\mbox{{\sc SmallGroup}}(1512,841)\}.
\end{align*}
Then, using the {\sc Magma} command {\tt OuterOrder}, we checked that $\mathcal{N}_{33}(1008)$ and $\mathcal{N}_{33}(1512)$ are empty. Thus, we now conclude that Conjecture~\ref{conjecture} indeed holds when $n\leq 2000$.

\vspace{1.5mm}

The calculations of $\mathcal{N}_2(n),\mathcal{N}_{31}(n)\cap\mathcal{N}_{32}(n)$ took a total of 22 min for all non-solvable numbers $n\leq 2000$. By contrast, it took a total of 231 min to confirm Conjecture~\ref{conjecture} directly by using the {\sc Magma} command {\tt RegularSubgroups} for all non-solvable numbers $n\leq 1000$ with $n\neq 480,672,960$. The calculations were done on an Intel Xeon CPU E5-1620 vs3 @ 3.5GHz machine with 16GB of RAM under Ubuntu 16.04LTS.

\vspace{1.5mm}

The cases $n=60,120,240,480,960,1920$ also follow from Theorem~\ref{main thm2}.

\section{Acknowledgments}

Part of this research was done while the authors visited each other at the University of Waikato and the  Yau Mathematical Sciences Center at Tsinghua University in year 2018. The visits were supported by the China Postdoctoral Science Foundation Special Financial Grant (no.: 2017T100060). We would like to thank both institutions. The first author would like to specially thank Prof. Daniel Delbourgo for his hospitality. 

\vspace{1.5mm}

The first author would like to thank Prof. Leandro Vendramin for pointing out that Theorem~\ref{thm1} (c) solves Problem 19.90 (d) in \cite{K book}.

\vspace{1.5mm}

Finally, we would like to thank the editor Prof. Eamonn O'Brien and the two referees for some very helpful suggestions. We would particularly like to thank one of the referees for pointing out a small gap in Theorem~$\ref{general thm}$ in the original manuscript.

\addresseshere

\begin{landscape}

\section*{Appendix: Computational codes}

{\sc Magma} code to compute $\mathcal{N}_1(n),\mathcal{N}_2(n),\mathcal{N}_{31}(n),\mathcal{N}_{32}(n)$:
\begin{lstlisting}[
  mathescape,
  columns=fullflexible,
  basicstyle=\ttfamily,
]
TestOrders:=[*any list of non-solvable numbers n which we wish to test*];
for n in TestOrders do
//Compute LL1 and LL2.
GG:=SmallGroups(n,func<x|not IsSolvable(x)>);
L1:=[];
L2:=[];
for G in GG do
Sub:=Subgroups(G);
  for H in Sub do
  order:=H`order;
    if IsSolvable(H`subgroup) then
    Append(~L1,order);
    end if;
    Append(~L2,order);
  end for;
end for;
LL1:=Set(L1);
LL2:=Set(L2);
//Compute NN1, NN2, NN31, NN32.
NN0:=[i:i in [1..#SmallGroups(n:Warning:=false)]|IsSolvable(SmallGroup(n,i))];
NN1:=[];
NN2:=[];
NN31:=[];
NN32:=[];
for i in NN0 do
N:=SmallGroup(n,i);
Fit:=FittingSubgroup(N);
//Determine whether N is in NN1.
  if Order(Fit) in LL1 then
  Append(~NN1,i);
  end if;
if i in NN1 then
Aut:=AutomorphismGroup(N);
//Determine whether N is in NN2.
  if not IsSolvable(Aut) then
  Append(~NN2,i);
  end if;
if i in NN2 then
Out:=[a:a in Generators(Aut)|not IsInner(a)];
NorSub:=NormalSubgroups(N);
CharSub:=[x:x in NorSub|forall{a:a in Out|a(x`subgroup) eq x`subgroup}];
MM:={M`order:M in CharSub}; 
//Determine whether N is in NN31.
  if n/2 notin MM then
  Append(~NN31,i);
  end if;
//Determine whether N is in NN32.
  if MM subset LL2 then
  Append(~NN32,i);
  end if;
end if;
end if;
end for;
//If NN2 is empty, then Conjecture 1.5 holds for n.
//If NN31 $\cap$ NN32 is empty, then Conjecture 1.5 holds for n as long as it holds for n/2.
//If NN31 $\cap$ NN32 is non-empty, then further test is required.
if IsEmpty(NN2) then
printf "Conjecture 1.5 holds for %o\n",n;
else
  if IsEmpty(NN32) then
  printf "Conjecture 1.5 holds for %o\n",n;
  else
    Inter:=Set(NN31) meet Set(NN32);
    if IsEmpty(Inter) then
    printf "Conjecture 1.5 holds for %o if it holds for %o\n",n,n/2;
    else
    print n,Inter;
    end if;
  end if;
end if;
end for;
\end{lstlisting}


{\sf GAP} code to compute $\mathcal{N}_1(n),\mathcal{N}_2(n),\mathcal{N}_{31}(n),\mathcal{N}_{32}(n)$:
\begin{lstlisting}[
  mathescape,
  columns=fullflexible,
  basicstyle=\ttfamily,
]
TestOrders:=[*any list of non-solvable numbers n which we wish to test*];;
for n in TestOrders do
GG:=Filtered(AllSmallGroups(n),G->not IsSolvable(G));
#Compute LL1 and LL2.
L1:=[];
L2:=[];
for G in GG do
Sub:=List(ConjugacyClassesSubgroups(G),Representative);
  for H in Sub do
  order:=Order(H);
    if IsSolvable(H) then
    Add(L1,order);
    fi;
    Add(L2,order);
  od;
od;
LL1:=Set(L1);
LL2:=Set(L2);
#Compute NN1, NN2, NN31, NN32.
NN0:=Filtered([1..Size(AllSmallGroups(n))],i->IsSolvable(SmallGroup(n,i)));
NN1:=[];
NN2:=[];
NN31:=[];
NN32:=[];
for i in NN0 do
N:=SmallGroup(n,i);
Fit:=FittingSubgroup(N);
#Determine whether N is in NN1.
  if Order(Fit) in LL1 then
  Add(NN1,i);
  fi;
if i in NN1 then
Aut:=AutomorphismGroup(N);
#Determine whether N is in NN2.
  if not IsSolvable(Aut) then
  Add(NN2,i);
  fi;
if i in NN2 then
CharSub:=CharacteristicSubgroups(N);
MM:=Set(CharSub,Order);
#Determine whether N is in NN31.
  if not n/2 in MM then
  Add(NN31,i);
  fi;
#Determine whether N is in NN32.
  if IsSubset(LL2,MM) then
  Add(NN32,i);
  fi;
fi;
fi;
od;
#If NN2 is empty, then Conjecture 1.5 holds for n.
#If NN31 $\cap$ NN32 is empty, then Conjecture 1.5 holds for n as long as it holds for n/2.
#If NN31 $\cap$ NN32 is non-empty, then further test is required.
if IsEmpty(NN2) then
Print("Conjecture 1.5 holds for ",n,"\n");
else
  if IsEmpty(NN32) then
  Print ("Conjecture 1.5 holds for ",n,"\n");
  else
    Inter:= Intersection(NN31,NN32);
    if IsEmpty(Inter) then
    Print("Conjecture 1.5 holds for ",n," if it holds for ",n/2,"\n");
    else
    Print(n,Inter,"\n");
    fi;
  fi;
fi;
od;
\end{lstlisting}


{\sc Magma} code to test Conjecture~\ref{conjecture} directly:

\begin{verbatim}
TestOrders:=[*any list of non-solvable numbers n which we wish to test*];
for n in TestOrders do
NN0:=[i:i in [1..#SmallGroups(n:Warning:=false)]|IsSolvable(SmallGroup(n,i))];
NN00:=[];
for i in NN0 do
N:=SmallGroup(n,i);
Hol:=Holomorph(N);
RegSub:=RegularSubgroups(Hol);
InsolRegSub:=[R:R in RegSub| not IsSolvable(R`subgroup)];
  if not IsEmpty(InsolRegSub) then
  Append(~NN0,i);
  end if;
end for;
//NN00 is empty if and only if Conjecture 1.5 holds for n.
print n,NN00; 
end for;
\end{verbatim}

%
%
%
%
%
%
%

\end{landscape}


\begin{thebibliography}{99}

\bibitem{SGlibrary}
H. U. Besche, B. Eick, and E. A. O'Brien, \emph{A millennium project: constructing small groups}, Internat. J. Algebra Comput. 12 (2002), no. 5, 623--644.

\bibitem{Brauer}
R. Brauer, \emph{On simple groups of order $5\cdot 3^a\cdot 2^b$}, Bull. Amer. Math. Soc. 74 (1968), 900--903.

%

\bibitem{Byott simple}
N. P. Byott, \emph{Hopf-Galois structures on field extensions with simple Galois groups}, Bull. London Math. Soc. 36 (2004), no. 1, 23--29.


%
%
\bibitem{Byott soluble}
N. P. Byott, \emph{Solubility criteria for Hopf-Galois structures}, New York J. Math. 21 (2015), 883--903.
%
%
%
\bibitem{Childs book}
L. N. Childs, \emph{Taming wild extensions: Hopf algebras and local Galois module theory}. Mathematical Surveys and Monographs, 80. American Mathematical Society, Providence, RI, 2000.
%
%

\bibitem{Wprimes}
R. Crandall, K. Dilcher, and C. Pomerance, \emph{A search for Wieferich and Wilson primes}, Math. Comp. 66 (1997), no. 217, 433--449. 

\bibitem{Douglas}
J. Douglas, \emph{On the supersolvability of bicyclic groups}, Proc. Natl. Acad. Sci. USA 47 (1961), 1493--1495.

%
%
%

\bibitem{cubefree}
H. Dietrich and B. Eick, \emph{On the groups of cube-free order}, J. Algebra 292 (2005), no. 1, 122--137. Addendum, \emph{ibid}, 367 (2012), 247--248.

\bibitem{gap}
The {\sf GAP}~Group, \emph{{\sf GAP} -- Groups, Algorithms, and Programming, 
  Version 4.10.2}; 
  2019,
  \verb+(https://www.gap-system.org)+.

\bibitem{YBE}
L. Guarnieri and L. Vendramin, \emph{Skew braces and the Yang-Baxter equation}, Math. Comp. 86 (2017), no. 307, 2519--2534.

\bibitem{Hall}
M. Hall, Jr., \emph{The theory of groups}. The Macmillan Co., New York, N.Y. 1959.
\bibitem{Herzog}
M. Herzog, \emph{On finite simple groups of order divisible by three primes only}, J. Algebra 10 (1968), 383--388.


\bibitem{Isaacs}
I. M. Isaacs, \emph{Finite group theory}. Graduate Studies in Mathematics, 92. American Mathematical Society, Providence, RI, 2008.

\bibitem{Ito}
N. Ito, \emph{\"{U}ber das Produkt von zwei abelschen Gruppen}, Math. Z. 62 (1955), 400--401.

\bibitem{Kegel}
O. H. Kegel, \emph{Produkte nilpotenter Gruppen}, Arch. Math. (Basel) 12 (1961), 90--93.

\bibitem{K book}
E. Khukhro and V. Mazurov, \emph{Kourovka Notebook (Unsolved Problems in Group Theory). No. 19}, 2019.

%


\bibitem{magma}
W. Bosma, J. Cannon, and C. Playoust, \emph{The Magma algebra system. I. The user language}, J. Symbolic Comput., 24 (1997), 23--265.

\bibitem{OEIS}
OEIS Foundation Inc., The On-Line Encyclopedia of Integer Sequences, \url{http://oeis.org}.

\bibitem{Skew}
A. Smoktunowicz and L. Vendramin, \emph{On skew braces} (with an appendix by N. Byott and L. Vendramin), J. Comb. Algebra 2 (2018), no. 1, 47--86.

\bibitem{Sz}
M. Suzuki, \emph{A new type of simple groups of finite order}, Proc. Nat. Acad. Sci. U.S.A. 46 (1960), 868--870.

\bibitem{Suzuki book}
M. Suzuki, \emph{Group theory. I}. Translated from the Japanese by the author. Grundlehren der Mathematischen Wissenschaften, 247. Springer-Verlag, Berlin-New York, 1982.



\bibitem{Tsang HG}
C. Tsang, \emph{Non-existence of Hopf-Galois structures and bijective crossed homomorphisms},  J. Pure Appl. Algebra 223 (2019), no. 7, 2804--2821.

\bibitem{Tsang Sn}
C. Tsang, \emph{Hopf-Galois structures on a Galois $S_n$-extension}, J. Algebra 531 (2019), no. 1, 349--361.

\bibitem{Tsang}
C. Tsang, \emph{Galois module structures and Hopf-Galois structures on extensions of number fields}, Postdoctoral report, Tsinghua University, 2018.

\bibitem{Wales}
D. Wales, \emph{Simple groups of order $7\cdot 3^a\cdot 2^b$}, J. Algebra 16 (1970), 575--596.

\bibitem{Wilson}
R. A. Wilson, \emph{The finite simple groups}. Graduate Texts in Mathematics, 251. Springer-Verlag London, Ltd., London, 2009.

\end{thebibliography}
\end{document}